\spnewtheorem{thm}{Theorem}{\bf}{\rm}
\spnewtheorem{cor}[thm]{Corollary}{\bf}{\rm}
\spnewtheorem{lem}[thm]{Lemma}{\bf}{\rm}
\spnewtheorem{prop}[thm]{Proposition}{\bf}{\rm}
\spnewtheorem{defn}[thm]{Definition}{\bf}{\rm}
\spnewtheorem{rem}[thm]{Remark}{\bf}{\rm}
\DeclareMathOperator{\Diag}{Diag}
\DeclareMathOperator{\diag}{diag}
\DeclareMathOperator{\ldet}{ldet}
\DeclareMathOperator{\conv}{conv}
\DeclareMathOperator{\Trace}{Trace}
\journalname{XXX}
\begin{document}

\title{Mixing convex-optimization bounds for maximum-entropy sampling}%
\author{Zhongzhu Chen  \and \hbox{Marcia Fampa}  \and \hbox{Am\'elie Lambert} \and \hbox{Jon Lee}} %

\institute{Z. Chen \at
              University of Michigan, Ann Arbor, MI, USA \\
              \email{zhongzhc@umich.edu}
              \and
           M. Fampa \at
              Universidade Federal do Rio de Janeiro, Brazil \\
              \email{fampa@cos.ufrj.br}           
           \and
           A. Lambert \at
              Conservatoire National des Arts et M\'etiers, Paris, France \\
              \email{amelie.lambert@cnam.fr}
              \and
            J. Lee \at
              University of Michigan, Ann Arbor, MI, USA \\
              \email{jonxlee@umich.edu}        
}

\authorrunning{Chen,   Fampa, Lambert \& Lee}

\date{\today}

\maketitle

\begin{abstract}
The maximum-entropy sampling problem is a fundamental and challenging combinatorial-optimization problem,
with application in spatial statistics.
It asks to find a maximum-determinant order-$s$ principal submatrix of an order-$n$ covariance matrix.
Exact solution methods for this NP-hard problem are based on
a branch-and-bound framework. Many of the known upper bounds for the optimal value are based on
convex optimization. We present a  methodology for ``mixing'' these bounds to achieve better bounds.
\keywords{maximum-entropy sampling \and convex optimization}
\subclass{90C25 \and 90C27 \and 90C51 \and 62K99 \and 62H11}
\end{abstract}


\section*{Introduction}
Let $C$ be an order-$n$ (symmetric) positive-definite real matrix, and let $s$ be an integer
satisfying $1\leq s\leq n$. Let $N:=\{1,2,\ldots,n\}$. We interpret $C$ as the covariance matrix
for a multivariate Gaussian random vector $Y_N$.
For nonempty
$S\subseteq N$, let $C[S,S]$ denote the principle submatrix of $C$ indexed by $S$.
We denote $\log \det(\cdot)$ by $\ldet(\cdot)$. Up to constants, $\ldet C[S,S]$
is the (differential) entropy associated with the subvector $Y_S$.
The \emph{maximum-entropy sampling problem} (MESP)
is
\[
z(C,s):=\max~ \left\{  \ldet C[S,S] ~:~ |S|=s,~ S\subseteq N\right\}
\]
(see \cite{SW}).

MESP is NP-hard (see \cite{KLQ}), and the main paradigm for exact solution of moderate-sized instances
is branch-and-bound (see \cite{KLQ}). In this context,
there has been considerable work on efficiently calculating good upper bounds for MESP; see
\cite{KLQ,AFLW_IPCO,AFLW_Using,HLW,LeeWilliamsILP,AnstreicherLee_Masked,BurerLee,Anstreicher_BQP_entropy,Kurt_linx},
the survey \cite{LeeEnv}, and the closely related works \cite{LeeLind2019,AFLW_Remote}.

A very relevant point for us is the following identity:
\[
\det C[S,S] = \det C \times \det C^{-1}[\bar{S},\bar{S}],
\]
where $\bar{S}$ denotes the complement of $S$ in $\{1,2,\ldots,n\}$.
With this identity,  we have $z(C,s)=\ldet C + z(C^{-1},n-s)$, and so upper bounds for
$z(C^{-1},n-s)$  yield upper bounds for $z(C,s)$, shifting by $\ldet C$.
This idea gives something for bounds that are \emph{not} invariant under
complementation (see \cite{AFLW_Using,HLW,LeeWilliamsILP,AnstreicherLee_Masked,Anstreicher_BQP_entropy}). It does not
give us anything for bounds that are  invariant under
complementation (see \cite{KLQ,Kurt_linx}).

In \S\ref{sec:general}, we describe a very simple
general idea for ``mixing'' bounds.
In \S\ref{sec:mixingBQPwiththecomplement}, we apply the simple
idea to MESP by mixing the so-called ``BQP bound'' (see \cite{Anstreicher_BQP_entropy}) with the same bound applied to
the complementary problem.
In \S\ref{sec:mixingNLPwiththecomplement}, we mix the so-called ``NLP bound'' (see \cite{AFLW_Using})
with the same bound applied to
the complementary problem.
Because the BQP bound and the NLP bound are not invariant under complementation,
we can get improved bounds with these mixings.
In \S\ref{sec:mixingLinx}, we look at tuning the so-called ``linx bound'' (see \cite{Kurt_linx}).
In \S\ref{sec:mixingNLPand another}, we investigate mixing the NLP bound (or its complement) with
a ``non-NLP bound'' (e.g., the linx bound, the BQP bound, or the complementary BQP bound).
In \S\ref{sec:conc}, we make some concluding remarks.

Throughout, when we carry out computational experiments with the BQP bound, the complementary BQP bound,
and the linx bound,
we use SDPT3 (see \cite{SDPT3,Toh2012}) via Matlab and Yalmip (a Matlab toolbox for optimization; see \cite{Lofberg2004}).
SDPT3 has an efficient way of handling $\ldet$,
and this functionality is exposed via Yalmip  (not, at this writing, by CVX).
But when we work with the NLP bound, we employ our own tailored interior-point solver.

Further notation: We denote transpose of a vector $x$ by $x'$, and likewise
for matrices.
 $S^n(\mathbb{R})$ denotes the set of real order-$n$ symmetric matrices.
$A\circ B$ denotes Hadamard (element-wise) product of compatible matrices $A$ and $B$,
while $A\bullet B:=\Trace(AB')$ denotes the matrix dot-product.
For $X\in \mathbb{R}^{n\times n}$, $\Diag(X):=(X_{1,1},X_{2,2},\ldots,X_{n,n})'\in\mathbb{R}^n$.
For $x\in \mathbb{R}^n$, $\diag(x)\in \mathbb{R}^{n\times n}$ is defined by
$\diag(x)_{i,i}:=x_{i,i}$ and $\diag(x)_{i,j}:=0$ for $i\not=j$.

\section{General mixing}\label{sec:general}

The idea is so simple that we do not dare claim that
it is original. We are however confident that it is new
in the context of the MESP. In this section, we describe the
general idea.

We start with a combinatorial maximization problem
\[
z:=\max \{ f(S) ~:~ S\in\mathcal{F}\},
\]
where $\mathcal{F}$ is an arbitrary subset of the
power set of $\{1,2,\ldots,n\}$.
We consider $m$ upper bounds for $z$ based on convex
relaxations in a possibly lifted space of variables.

 As is standard, for $x\in\mathbb{R}^n$,
we denote the \emph{support} of $x$ by $\underbar{x}:=\{i\in \{1,2,\ldots,n\}
: x_i\not=0\}$. Also, if $x\in\{0,1\}^n$, then $x$ is the
\emph{characteristic vector} of $\underbar{x}$.

For $i=1,2,\ldots,m$, the convex set $\mathcal{P}_i$ uses variables
$(x,\mathcal{X}^i)$. The vector $x\in [0,1]^n$
relaxes $x\in \{0,1\}^n$ and is used to model
$\mathcal{F}$. Specifically, we assume that
if we project $\mathcal{P}_i$ onto $\mathbb{R}^n$,
we get a subset of $[0,1]^n$,
and then if we intersect with $\mathbb{Z}^n$,
 we get precisely the characteristic vectors
of $\mathcal{F}$.
 Next, for $i=1,2,\ldots,m$,
we have a concave function $f_i$, taking
$(x,\mathcal{X}^i)\in \mathcal{P}_i$ to $\mathbb{R}$.
We assume that for $(x,\mathcal{X}^i)\in \mathcal{P}_i$
such that $x\in\mathbb{Z}^n$, we have $f_i(x,\mathcal{X}^i)
= f(\underbar{x})$.
In this sense,
each $\mathcal{P}_i$ is an \emph{exact relaxation}
(possibly in a extended space) of $\conv\left(\{x\in\mathbb{R}^n : \underbar{x} \in \mathcal{F}\}\right)$.

Now, for $i=1,2,\ldots,m$, we have the convex programs
\[
v_i:= \max\left\{ f_i(x,\mathcal{X}^i) ~:~ (x,\mathcal{X}^i)\in \mathcal{P}_i\right\},
\]
yielding $m$ upper bounds on $z$.

Next, for $\alpha\in\mathbb{R}^m$, such that $\alpha\geq 0$, $e'\alpha=1$, we
define the \emph{mixing bound}
\[
v(\alpha) := \max\left\{ \sum_{i=1}^m \alpha_i f_i(x,\mathcal{X}^i) ~:~
(x,\mathcal{X}^i)\in \mathcal{P}_i~,~ 1=1,2,\ldots,m
\right\}.
\]

The following is very simple to establish.
\begin{prop}
The function $v(\alpha)$ is convex on $\left\{\alpha\in\mathbb{R}^m ~:~ \alpha\geq 0\right\}$, and
for all $\alpha\in\mathbb{R}^m$ such that $e'\alpha=1$, we
have $v(\alpha)\geq z$.
\end{prop}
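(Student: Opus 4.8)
The statement has two parts, and I would handle them separately. For the inequality $v(\alpha)\geq z$, the plan is to exhibit a feasible point of the mixing program whose objective value is at least $z$. Let $S^*\in\mathcal{F}$ attain the combinatorial optimum, and let $x^*\in\{0,1\}^n$ be its characteristic vector. By the exactness assumption on each $\mathcal{P}_i$, the vector $x^*$ extends to a point $(x^*,\mathcal{X}^i)\in\mathcal{P}_i$ for every $i$ (the integer points in the projection are exactly the characteristic vectors of $\mathcal{F}$), and at such a point $f_i(x^*,\mathcal{X}^i)=f(\underbar{x^*})=f(S^*)=z$. Then $\big((x^*,\mathcal{X}^1),\ldots,(x^*,\mathcal{X}^m)\big)$ — with a \emph{common} $x^*$-part, which is what makes the program well-posed — is feasible for the mixing program, and its objective value is $\sum_i\alpha_i f_i(x^*,\mathcal{X}^i)=\sum_i\alpha_i z=(e'\alpha)z=z$. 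Hence $v(\alpha)\geq z$ whenever $e'\alpha=1$.

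For convexity of $v$ on the nonnegative orthant, the plan is the standard ``partial maximization of a jointly affine function'' argument. Fix $\alpha^{(0)},\alpha^{(1)}\geq 0$ and $\lambda\in[0,1]$, and put $\alpha=\lambda\alpha^{(0)}+(1-\lambda)\alpha^{(1)}$. Take any point $w=\big((x,\mathcal{X}^1),\ldots,(x,\mathcal{X}^m)\big)$ feasible for the mixing program (note the feasible region does not depend on $\alpha$). The objective $\sum_i\alpha_i f_i(x,\mathcal{X}^i)$, viewed as a function of $\alpha$ with $w$ held fixed, is \emph{affine} (linear) in $\alpha$, so
\[
\sum_{i=1}^m \alpha_i f_i(x,\mathcal{X}^i) = \lambda\sum_{i=1}^m \alpha^{(0)}_i f_i(x,\mathcal{X}^i) + (1-\lambda)\sum_{i=1}^m \alpha^{(1)}_i f_i(x,\mathcal{X}^i) \leq \lambda\, v(\alpha^{(0)}) + (1-\lambda)\, v(\alpha^{(1)}).
\]
Taking the supremum over all feasible $w$ on the left gives $v(\alpha)\leq \lambda v(\alpha^{(0)})+(1-\lambda)v(\alpha^{(1)})$, which is convexity. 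One should remark that $v$ is real-valued on the nonnegative orthant: each $v_i$ is finite (it is an attained maximum of a concave function over $\mathcal{P}_i$), and $v(\alpha)\leq\sum_i\alpha_i v_i<\infty$ by dropping the coupling of the $x$-parts, while $v(\alpha)>-\infty$ by feasibility; so the inequalities above are between finite quantities.

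I do not anticipate a serious obstacle — the result is genuinely a soft argument. The only point requiring a little care is making sure the feasible point used in the lower-bound step has a \emph{single} shared $x$-part $x^*$ across all $i$; this is exactly what the coupling constraint in the definition of $v(\alpha)$ demands, and it is available because each $\mathcal{P}_i$ is an exact relaxation over the \emph{same} vector $x$. A secondary bookkeeping point is the finiteness/properness of $v$ just noted, so that ``convex'' is meant in the ordinary (not extended-real) sense. Beyond that, the convexity half is just the elementary fact that a pointwise supremum of affine functions is convex, specialized to partial maximization.
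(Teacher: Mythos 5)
Your proposal is correct and is exactly the argument the paper has in mind when it says the proposition is ``very simple to establish'' (the paper omits the proof): the lower bound comes from lifting the characteristic vector of an optimal $S^*$ into each $\mathcal{P}_i$ via exactness, and convexity comes from $v$ being a pointwise supremum over the ($\alpha$-independent) feasible set of functions that are linear in $\alpha$. Your added remarks on the shared $x$-part and on finiteness for $\alpha\geq 0$ are sensible bookkeeping and do not change the approach.
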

Owing to this, a natural goal is to minimize the convex $v(\alpha)$, over
$\left\{\alpha\in\mathbb{R}^m ~:~\right.\allowbreak \left. e'\alpha=1,~ \alpha\geq 0\right\}$.
\emph{The power of the mixing bound is that the same variable $x$ is appearing in each of the $\mathcal{P}_i$~.}
If it were not for this, then the minimum value of $v(\alpha)$,
over $\left\{\alpha\in\mathbb{R}^m ~:~\right.\allowbreak \left. e'\alpha=1,~ \alpha\geq 0\right\}$, would trivially be
$\max_{i=1}^m v_i$~.

Of course each $\mathcal{P}_i$ can
be strengthened to improve the mixing bound.
But very importantly, we note that the mixing bound can be strengthened by
introducing valid equations and inequalities across the
entire variable space: $x, \mathcal{X}_1,\ldots,\mathcal{X}_m$.
We exploit both of these observations in the next section.

Before continuing, we wish to mention that a slightly different formulation for
finding an optimal mixing is as the following convex program.

\begin{align*}
&\max\ v\\
&\mbox{subject to:}\\
&v \leq f_i(x,\mathcal{X}^i),~ i=1,2,\ldots,m;\\
&(x,\mathcal{X}^i)\in \mathcal{P}_i~,~ 1=1,2,\ldots,m.
\end{align*}

The equivalence can easily be seen by Lagrangian duality.
We prefer our formulation because by aggregating the nonlinearities
into the objective, in the style of a surrogate dual, we
get a formulation that is more easily handled by solvers
and more easily optimized in terms of selecting good
mixing (and other bound) parameters.
Related to this, in the context of branch-and-bound, we can expect
that child subproblems will be able to inherit good parameters
from their parents, leading to faster computations.


\section{Mixing the BQP bound with the complementary BQP bound}\label{sec:mixingBQPwiththecomplement}

In this section, we apply the simple mixing idea from \S\ref{sec:general},
mixing the (scaled) BQP bound for MESP (see \cite{Anstreicher_BQP_entropy}) with the same bound
applied to the complementary problem. We will see that minimizing this bound over
$\alpha$ gives us a bound that is sometimes stronger than the two bounds that it is based upon
--- it is always at least as strong. In fact, we will see that the bound will tend to be stronger
when the two bounds being mixed have similar values.

\subsection{Mixing BQP and its complement}

 Let
\begin{align*}
P(n,s):=&\{
(x,X)\in\mathbb{R}^n\times S^n(\mathbb{R}) ~:~ \\
&\quad X -xx'\succeq 0,~ \Diag(X)=x,~ e'x=s,~ Xe=sx
\}\\
Q(n,n-s):&=\{
(y,Y)\in\mathbb{R}^n\times S^n(\mathbb{R})  ~:~\\
&\quad Y-yy''\succeq 0,~ \Diag(Y)=y,~ e'y=n-s,~ Ye=(n-s)y
\}.
\end{align*}
The set $P(n,s)$ (respectively, $Q(n,n-s)$) is the well-known SDP relaxation of
the binary solutions to $X-xx'=0$, $e'x=s$  (respectively, $Y-yy'=0$, $e'y=n-s$).

We introduce the \emph{mixed BQP (mBQP) bound}:
\begin{align*}
&v(C,s;\alpha,\gamma_1,\gamma_2):=\\
& \max~ (1-\alpha)\left(
 \ldet \left(
\gamma_1 C\circ X + I - {\rm diag}(x)
\right) - s {\rm log} \gamma_1
\right)\\
& \quad +
\alpha\left(
 \ldet \left(
\gamma_2 C^{-1}\circ Y + I - {\rm diag}(y)
\right) - (n-s) {\rm log} \gamma_2
+ \ldet  C
\right),\\
&\mbox{subject to:}\\
&\quad (x,X)\in P(n,s),~ (y,Y) \in Q(n,n-s),~
 x+y=e,
\end{align*}
where $0\leq \alpha \leq 1$ is a ``weighting'' parameter, and $\gamma_1,\gamma_2>0$ are
``scaling parameters''. We will see that this mBQP bound is a manifestation of the idea from \S\ref{sec:general},
mixing the scaled BQP bound with its complement.

It is almost immediate that the mBQP bound is a mixing in the precise sense of \S\ref{sec:general},
but because of
the way that we have formulated it with different variables for the complementary part,
there is a little checking to do.

We define an invertible linear map $\Phi$ by
\[
\Phi(x,X) = (e-x, X+ee'-ex'-xe').
\]
Notice that if $(\hat{y},\hat{Y)}:=\Phi(\hat{x},\hat{X})$,
then $\hat{Y}_{ij}= \hat{X}_{ij} +1 - \hat{x}_j - \hat{x}_i$.

We have the following useful result.
\begin{lem}\label{complementary_lift}
$(\hat{x},\hat{X})\in P(n,s)$ if and only if $\Phi(\hat{x},\hat{X})\in Q(n,n-s)$.
\end{lem}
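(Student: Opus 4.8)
The plan is to verify the equivalence by a direct, symmetric computation, checking each of the four defining conditions of $Q(n,n-s)$ against the corresponding condition of $P(n,s)$ under the map $\Phi$. Write $(\hat y,\hat Y):=\Phi(\hat x,\hat X)=(e-\hat x,\ \hat X+ee'-e\hat x'-\hat xe')$, and recall the componentwise identity noted in the excerpt, $\hat Y_{ij}=\hat X_{ij}+1-\hat x_i-\hat x_j$. Since $\Phi$ is an invertible linear map and, as one checks immediately, $\Phi^{-1}$ has exactly the same form (indeed $\Phi$ is an involution up to the obvious relabeling: applying the analogous complementing map to $(\hat y,\hat Y)$ recovers $(\hat x,\hat X)$), it suffices to prove only the forward implication; the converse then follows by symmetry.

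First I would handle the three linear/affine conditions. For $\Diag(\hat Y)=\hat y$: from the componentwise identity, $\hat Y_{ii}=\hat X_{ii}+1-2\hat x_i$, and $\Diag(\hat X)=\hat x$ gives $\hat Y_{ii}=\hat x_i+1-2\hat x_i=1-\hat x_i=\hat y_i$. For $e'\hat y=n-s$: $e'\hat y=e'(e-\hat x)=n-s$ since $e'\hat x=s$. For $\hat Y e=(n-s)\hat y$: compute $\hat Ye=\hat Xe+ee'e-e\hat x'e-\hat xe'e=\hat Xe+ne-se-n\hat x$; using $\hat Xe=s\hat x$ this is $s\hat x+(n-s)e-n\hat x=(n-s)e-(n-s)\hat x=(n-s)(e-\hat x)=(n-s)\hat y$.

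Next I would treat the PSD condition $\hat Y-\hat y\hat y'\succeq 0$. The key computation is to show $\hat Y-\hat y\hat y'=\hat X-\hat x\hat x'$ as matrices, which makes the PSD condition literally identical on the two sides. Expanding, $\hat y\hat y'=(e-\hat x)(e-\hat x)'=ee'-e\hat x'-\hat xe'+\hat x\hat x'$, so $\hat Y-\hat y\hat y'=\bigl(\hat X+ee'-e\hat x'-\hat xe'\bigr)-\bigl(ee'-e\hat x'-\hat xe'+\hat x\hat x'\bigr)=\hat X-\hat x\hat x'$. Thus all four conditions transfer, establishing $\Phi(\hat x,\hat X)\in Q(n,n-s)$; applying the same argument to the complementing map in the other direction gives the converse.

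The computations here are all routine linear algebra, so there is no single hard obstacle; the only point requiring a bit of care is bookkeeping the rank-one and all-ones correction terms correctly in the PSD identity $\hat Y-\hat y\hat y'=\hat X-\hat x\hat x'$ and in the verification of $\hat Ye=(n-s)\hat y$, where the cross terms involving $ee'e$, $e\hat x'e$, and $e'e$ must be tracked as scalars times $e$ or $\hat x$. Once the identity $\hat Y-\hat y\hat y'=\hat X-\hat x\hat x'$ is in hand, the PSD equivalence is immediate, and it is worth recording this identity explicitly since it will presumably be reused when relating the two halves of the mBQP objective.
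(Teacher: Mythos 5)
Your proof is correct and follows essentially the same route as the paper's: checking the four defining constraints directly, with the key identity $\hat Y-\hat y\hat y'=\hat X-\hat x\hat x'$ handling the PSD condition, and the converse by the symmetry/involutive nature of the complementing map (the paper simply says ``the other direction is similar'').
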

\begin{proof}
We check the constraints:
\begin{align*}
&\hat{Y}-\hat{y}\hat{y}' = \hat{X} + ee' - e\hat{x}' - \hat{x}e' - (e-\hat{x})(e-\hat{x})'
= \hat{X}-\hat{x}\hat{x} \succeq 0'~.\\
&\Diag(\hat{Y})=\Diag(\hat{X}) + \Diag(ee') - \Diag(e\hat{x}') - \Diag(\hat{x}e')
=\hat{x} + e - \hat{x} -\hat{x} = \hat{y}~.\\
&e'\hat{y} = e'(e-\hat{x})=n-s~.\\
&\hat{Y}e= \!\left(\hat{X} + ee' - e\hat{x}' - \hat{x}e'\right)\! e = s\hat{x} + ne -se -n\hat{x} =(n-s)(e-\hat{x})=(n-s)\hat{y}~.
\end{align*}
The other direction is similar.
\qed
\end{proof}

For $\alpha=0$ and $\alpha=1$, the mBQP reduces to the bounds of  \cite{Anstreicher_BQP_entropy}\footnote{Helmberg
suggested (essentially) the BQP bound in 1995  (see \cite{LeeEnv,FedorovLee}) to Anstreicher and Lee, but no one
developed it at all until \cite{Anstreicher_BQP_entropy} did so extensively, drawing in and significantly extending
some techniques from \cite{AFLW_Using}.}:
\begin{prop}
$v(C,s;\alpha=0,\gamma_1,\gamma_2)$ is equal to the scaled BQP bound
\begin{align*}
- s {\rm log} \gamma_1 ~+~ &\max~ \ldet \left(
\gamma_1 C\circ X + I - {\rm diag}(x)
\right) ~,\\
&\mbox{subject to:}\\
&\quad (x,X)\in P(n,s),
\end{align*}
and   $v(C,s;\alpha=1,\gamma_1,\gamma_2)$  is equal to the scaled complementary BQP bound
\begin{align*}
 \ldet  C - (n-s) {\rm log} \gamma_2 ~+~ &\max~ \ldet \left(
\gamma_2 C^{-1}\circ Y + I - {\rm diag}(y)
\right) \\
&\mbox{subject to:}\\
&\quad (y,Y)\in Q(n,n-s).
\end{align*}
\end{prop}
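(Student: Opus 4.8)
The plan is to substitute the two endpoint values $\alpha=0$ and $\alpha=1$ into the definition of $v(C,s;\alpha,\gamma_1,\gamma_2)$ and to observe that in each case the objective loses all dependence on one of the two blocks of variables, so that the maximization in that block becomes vacuous except through the coupling constraint $x+y=e$. The only point that needs care — and essentially the only obstacle — is to verify that this coupling constraint does \emph{not} restrict the surviving block; this is exactly what Lemma~\ref{complementary_lift} provides, since it lets us complete any feasible point of one block to a full feasible point.

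First, set $\alpha=0$. The objective collapses to $\ldet\left(\gamma_1 C\circ X + I - \diag(x)\right) - s\log\gamma_1$, in which the term $-s\log\gamma_1$ is a constant that pulls outside the maximization, leaving an objective depending only on $(x,X)$. The feasible region still carries the constraints $(y,Y)\in Q(n,n-s)$ and $x+y=e$, so one must check that the image of the feasible region under the projection onto the $(x,X)$-coordinates is exactly $P(n,s)$. One inclusion is immediate from the definition. For the reverse inclusion, given any $(x,X)\in P(n,s)$, put $(y,Y):=\Phi(x,X)=\bigl(e-x,\,X+ee'-ex'-xe'\bigr)$; then $y=e-x$, so $x+y=e$, and $(y,Y)\in Q(n,n-s)$ by Lemma~\ref{complementary_lift}. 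Hence $(x,X,y,Y)$ is feasible, the projection is precisely $P(n,s)$ (which is nonempty, containing the lifts of characteristic vectors of $s$-subsets of $N$), and therefore $v(C,s;\alpha=0,\gamma_1,\gamma_2)$ equals the scaled BQP bound.

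The case $\alpha=1$ is entirely symmetric. The objective reduces to $\ldet\left(\gamma_2 C^{-1}\circ Y + I - \diag(y)\right) - (n-s)\log\gamma_2 + \ldet C$, with $-(n-s)\log\gamma_2+\ldet C$ a constant pulled outside, and the remaining objective depending only on $(y,Y)$; one checks that the projection of the feasible region onto $(y,Y)$ is exactly $Q(n,n-s)$. Again one inclusion is by definition, and for the other, given $(y,Y)\in Q(n,n-s)$ take $(x,X):=\Phi(y,Y)$: a one-line computation shows $\Phi$ is a linear involution, so $\Phi(x,X)=(y,Y)\in Q(n,n-s)$, whence $(x,X)\in P(n,s)$ by Lemma~\ref{complementary_lift}, and $x=e-y$ gives $x+y=e$. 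Thus the projected feasible region is $Q(n,n-s)$, and $v(C,s;\alpha=1,\gamma_1,\gamma_2)$ equals the scaled complementary BQP bound. The routine calculations are the involution identity $\Phi\circ\Phi=\mathrm{id}$ and the verification already done in Lemma~\ref{complementary_lift}; there is nothing deeper to prove.
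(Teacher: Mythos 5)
Your proposal is correct and follows essentially the same route as the paper: at each endpoint the objective drops its dependence on one variable block, and Lemma~\ref{complementary_lift} guarantees that any feasible point of the surviving block can be completed (via $\Phi$, respectively its inverse) to a feasible point of the mBQP formulation satisfying $x+y=e$. Your phrasing via projections of the feasible region, and the explicit use of $\Phi$ being an involution for the $\alpha=1$ case, are just slightly more detailed renderings of the paper's argument, not a different proof.
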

\begin{proof}
When $\alpha=0$, for any $(\hat{x},\hat{X})\in P(n,s)$,
Lemma \ref{complementary_lift} allows us to always be able to choose a
$(\hat{y},\hat{Y})$, which together with $(\hat{x},\hat{X})$ is feasible
for the mBQP optimization formulation.
And because $\alpha=0$, the choice of $(\hat{y},\hat{Y})$ has no impact
on the mBQP objective function.
Similarly, when $\alpha=1$, for any  $(\hat{y},\hat{Y})\in Q(n,n-s)$,
Lemma \ref{complementary_lift} allows us to always be able to choose a
$(\hat{x},\hat{X})$
which together with $(\hat{y},\hat{Y})$ is feasible
for the mBQP optimization formulation.
And because $\alpha=1$, the choice of $(\hat{x},\hat{X})$ has no impact
on the mBQP objective function.
\qed
\end{proof}

Of course we have
\begin{prop}
\[
z(C,s)\leq  v(C,s;\alpha,\gamma_1,\gamma_2)~.
\]
\end{prop}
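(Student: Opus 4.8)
The claim $z(C,s)\le v(C,s;\alpha,\gamma_1,\gamma_2)$ is a validity statement for the mBQP bound, and the plan is to derive it directly from the general framework of \S\ref{sec:general} together with the known validity of the (scaled) BQP bound. First I would recall that, by \cite{Anstreicher_BQP_entropy}, for any $\gamma_1>0$ the scaled BQP bound $-s\log\gamma_1 + \max\{\ldet(\gamma_1 C\circ X + I - \diag(x)) : (x,X)\in P(n,s)\}$ is a valid upper bound for $z(C,s)$; and, applying the same fact to $C^{-1}$ with parameter $n-s$ and then using the complementation identity $z(C,s)=\ldet C + z(C^{-1},n-s)$, the scaled complementary BQP bound $\ldet C - (n-s)\log\gamma_2 + \max\{\ldet(\gamma_2 C^{-1}\circ Y + I - \diag(y)) : (y,Y)\in Q(n,n-s)\}$ is also a valid upper bound for $z(C,s)$. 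Equivalently, there exist concave objective functions $f_1,f_2$ on the convex sets $P(n,s)$ and $Q(n,n-s)$, respectively, that agree with $\ldet C[\underline{x},\underline{x}]$ at integer points, so that $(x,X)\mapsto$ (scaled BQP objective) and $(y,Y)\mapsto$ (scaled complementary BQP objective) are exact relaxations in the sense of \S\ref{sec:general}.

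Next I would verify that the mBQP formulation fits the mixing template: the joint feasible region is $\{(x,X,y,Y) : (x,X)\in P(n,s),\ (y,Y)\in Q(n,n-s),\ x+y=e\}$, and by Lemma~\ref{complementary_lift} the coupling constraint $x+y=e$ together with $(x,X)\in P(n,s)$ is \emph{consistent} — given any $(\hat x,\hat X)\in P(n,s)$ one may take $(\hat y,\hat Y)=\Phi(\hat x,\hat X)$, which lies in $Q(n,n-s)$ and satisfies $\hat y = e-\hat x$. Hence the projection of the mBQP feasible region onto the $x$-variables is exactly the projection of $P(n,s)$ onto $x$, which in turn contains every characteristic vector of a feasible set $S$ with $|S|=s$. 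The objective is $(1-\alpha)$ times the scaled BQP objective plus $\alpha$ times the scaled complementary BQP objective, i.e.\ a convex combination (for $0\le\alpha\le1$) of the two concave objectives, hence concave; this is precisely $v(\alpha)$ with weights $\alpha_1=1-\alpha$, $\alpha_2=\alpha$, $e'\alpha=1$, $\alpha\ge0$.

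With this identification, the result follows from the second half of the Proposition in \S\ref{sec:general}: for any admissible weight vector, $v(\alpha)\ge z$. Concretely: take an optimal $S^\star$ for $z(C,s)$, let $\hat x$ be its characteristic vector, let $\hat X=\hat x\hat x'$ (which makes $(\hat x,\hat X)\in P(n,s)$), and let $(\hat y,\hat Y)=\Phi(\hat x,\hat X)$, so $(\hat y,\hat Y)\in Q(n,n-s)$ and $x+y=e$. At this integer point the scaled BQP objective equals $\ldet C[S^\star,S^\star] = z(C,s)$, and the scaled complementary BQP objective equals $\ldet C + \ldet C^{-1}[\bar S^\star,\bar S^\star] = z(C,s)$ as well (using the determinant identity). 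Therefore the mBQP objective at this point equals $(1-\alpha)z(C,s) + \alpha\, z(C,s) = z(C,s)$, and since this point is feasible, $v(C,s;\alpha,\gamma_1,\gamma_2)\ge z(C,s)$.

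The only mildly delicate point — and the one I would be most careful about — is making sure the two scaling parameters $\gamma_1,\gamma_2$ do not disturb validity: one must check that the scaled BQP objective evaluated at an integer $x$ is independent of $\gamma_1$ (the $-s\log\gamma_1$ term exactly cancels the $\log\det$ of the $\gamma_1$-scaling on the $s$-dimensional support), and likewise for $\gamma_2$ on the complementary side. This is exactly the content of the evaluation used in the preceding Proposition's proof, so it is available to us; beyond that, the argument is just bookkeeping. I would phrase the final proof as: instantiate the general mixing Proposition, invoke Lemma~\ref{complementary_lift} to supply the complementary lift, and evaluate at the characteristic vector of an optimal $S$.
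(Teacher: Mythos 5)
Your proposal is correct and follows essentially the same route the paper intends: the paper states this proposition without proof (``Of course we have\dots''), treating it as immediate from the general mixing proposition of \S\ref{sec:general} together with the validity of the scaled BQP bound and its complement, which is exactly the argument you spell out. Your explicit verification --- evaluating at $(\hat x,\hat x\hat x')$ and its image under $\Phi$ for an optimal $S$, and checking that the $\gamma_i$-scalings cancel at integer points --- is sound and simply fills in the routine details the paper leaves implicit.
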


 We can see from the convexity of $v$
that there is a good potential
to improve on the minimum of the scaled BQP bound and the
scaled complementary BQP bound precisely when these two bounds are similar.
See Figure \ref{graph_convex} where this is illustrated the well-known ``$n=63$'' benchmark covariance matrix.
A simple univariate search
can find a good value for $\alpha$. Moreover, in the context of branch-and-bound
for exact solution of the MESP, a good (starting) value of $\alpha$ can be inherited from
a parent.

\begin{figure}[ht!]
\captionsetup[subfigure]{labelformat=empty,justification=centering}
    \centering
    \subfloat[][]{
        \includegraphics[width=0.47\textwidth]{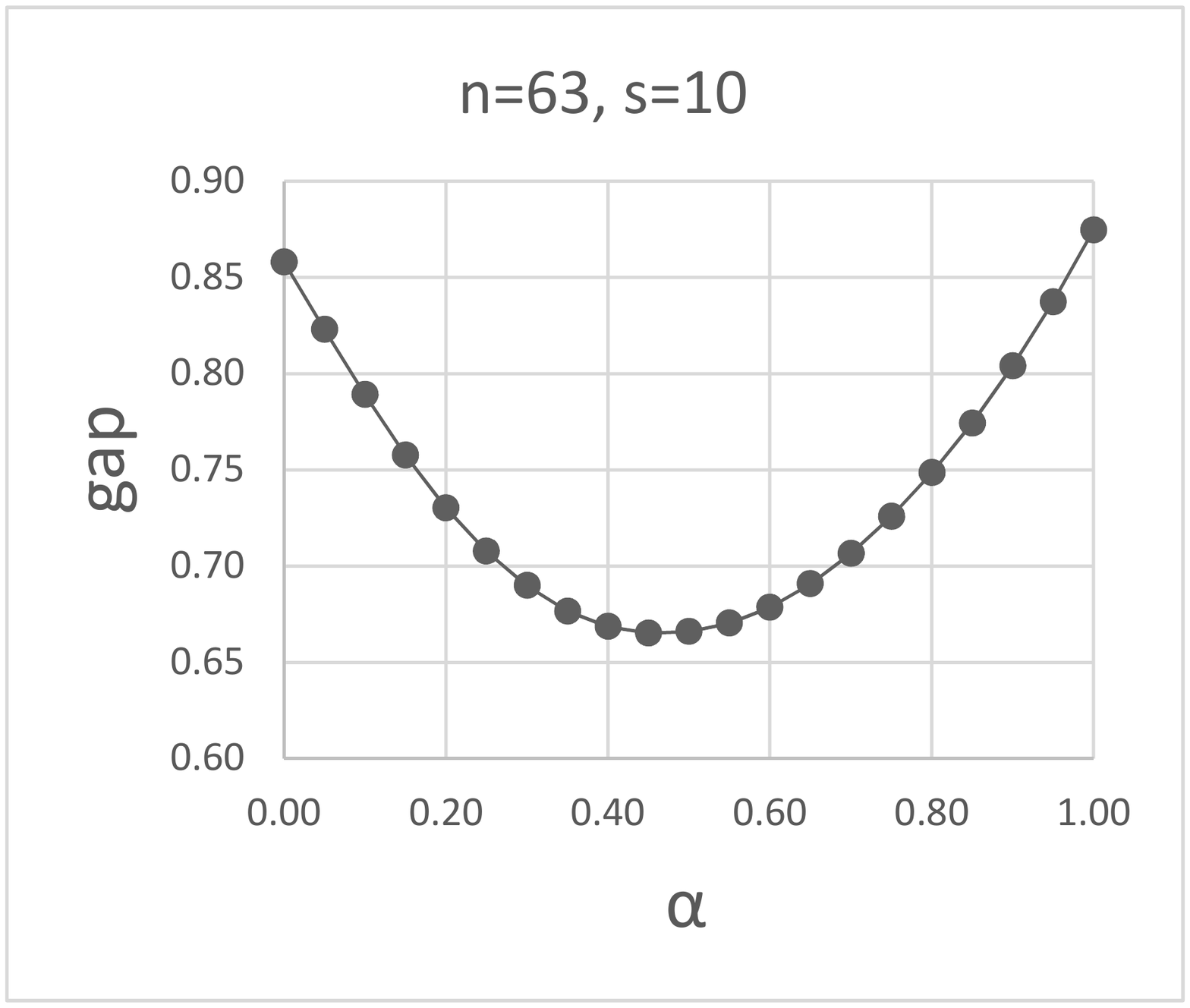}
}
   \subfloat[][]{
       \includegraphics[width=0.47\textwidth]{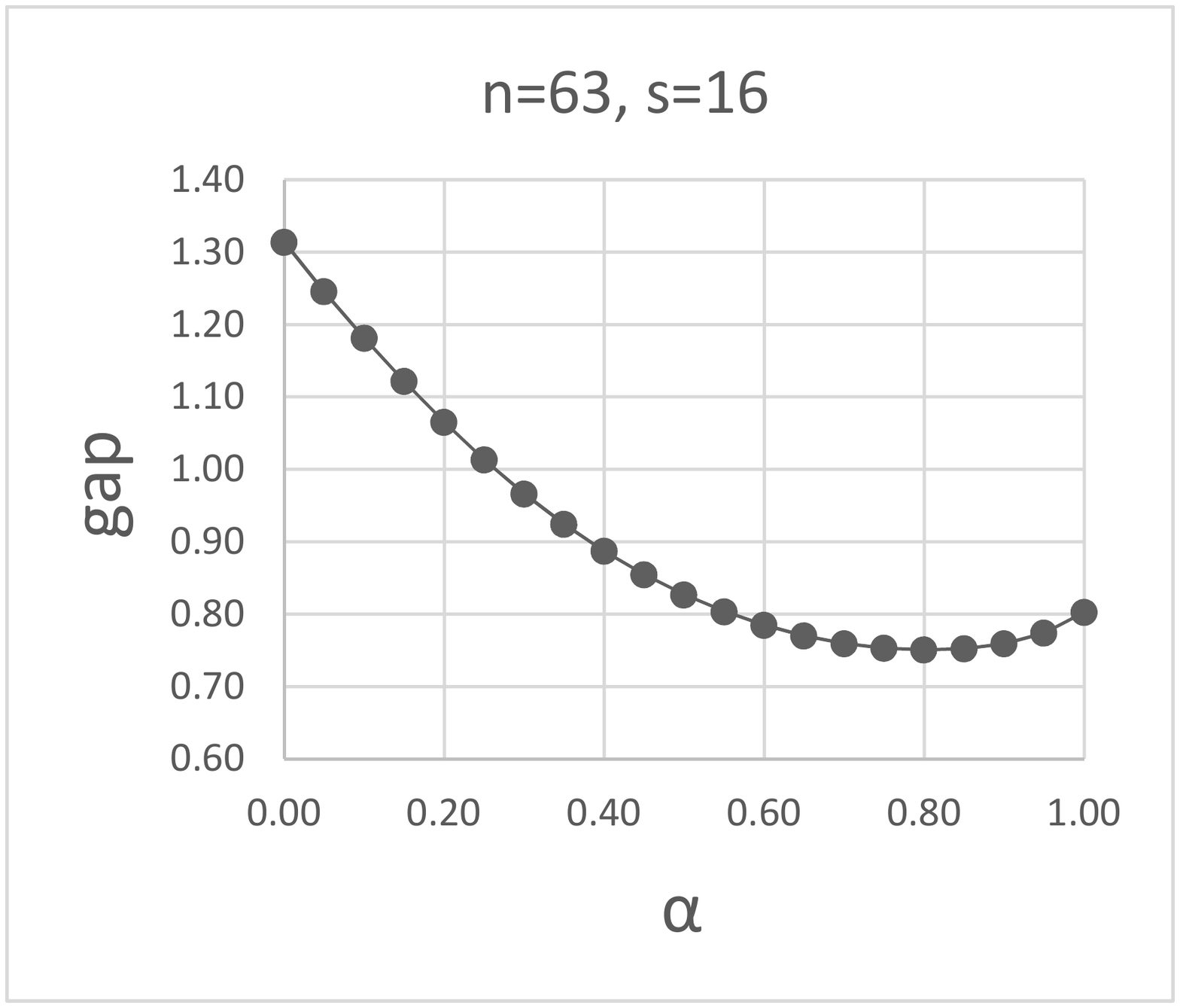}
}
    \caption{Gap vs. $\alpha$ (optimized $\gamma_i$)}\label{graph_convex}
\end{figure}


\subsection{Valid equations in the extended spaces}

Next, we will see that we can strengthen the mBQP bound,
using equations that link the extended variables from the two bounds that
we mix, and then even eliminate the variables $(y,Y)$.

\begin{prop}\label{prop:strengthenBQPm}
\begin{align*}
&v(C,s;\alpha,\gamma_1,\gamma_2) ~\geq~ \check{v}(C,s;\alpha,\gamma_1,\gamma_2):=\\
& \max~ (1-\alpha)\left(
 \ldet \left(
\gamma_1 C\circ X + I - {\rm diag}(x)
\right) - s {\rm log} \gamma_1
\right)\\
& \quad +
\alpha\left(
 \ldet \left(
\gamma_2 C^{-1}\circ (X+ee'-ex'-xe') + I - {\rm diag}(e-x)
\right)\right. \\
& \left.\qquad- (n-s) {\rm log} \gamma_2
+ \ldet  C \vphantom{C^{-1}}
\right),\\
&\mbox{subject to:}\\
&\quad (x,X)\in P(n,s).
\end{align*}
\end{prop}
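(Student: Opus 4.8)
The plan is to show that $\check v$ is nothing but the mBQP optimization with the $(y,Y)$ variables pinned to the image of $(x,X)$ under the map $\Phi$ from Lemma~\ref{complementary_lift}; since this is a restriction of the feasible region of the mBQP program, the resulting optimal value can only be smaller, giving the claimed inequality. So the first step is to add to the mBQP formulation the valid linear equations $(y,Y)=\Phi(x,X)$, i.e. $y=e-x$ and $Y=X+ee'-ex'-xe'$. These are legitimate to add because on integer points they hold for the characteristic vectors we care about (they are ``valid equations in the extended space'' in the sense of \S\ref{sec:general}), but in fact for the argument we do not even need validity in that sense --- we just need that for every $(x,X)\in P(n,s)$ the point $\Phi(x,X)$ is a legal choice of $(y,Y)$, which is exactly what Lemma~\ref{complementary_lift} gives.

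The second step is bookkeeping: substitute $y=e-x$ and $Y=X+ee'-ex'-xe'$ into the mBQP objective and into the constraints. The constraint $(y,Y)\in Q(n,n-s)$ becomes automatically satisfied by Lemma~\ref{complementary_lift}, and the linking constraint $x+y=e$ becomes the tautology $x+(e-x)=e$; so the only remaining constraint is $(x,X)\in P(n,s)$. In the objective, the second (complementary) term
\[
\alpha\left(\ldet\!\left(\gamma_2 C^{-1}\circ Y + I - \diag(y)\right) - (n-s)\log\gamma_2 + \ldet C\right)
\]
becomes, after substitution,
\[
\alpha\left(\ldet\!\left(\gamma_2 C^{-1}\circ (X+ee'-ex'-xe') + I - \diag(e-x)\right) - (n-s)\log\gamma_2 + \ldet C\right),
\]
which is precisely the second term in the definition of $\check v$. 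The first term is untouched. Hence the substituted program is exactly the $\check v$ program.

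The third step is to conclude. We have exhibited $\check v(C,s;\alpha,\gamma_1,\gamma_2)$ as the value of the mBQP program subject to the extra equality constraints $(y,Y)=\Phi(x,X)$. Restricting the feasible set of a maximization problem cannot increase the optimal value, so $\check v(C,s;\alpha,\gamma_1,\gamma_2)\le v(C,s;\alpha,\gamma_1,\gamma_2)$, which is the assertion. I expect no real obstacle here: the only thing to be careful about is checking that after the substitution the feasible region of the reduced program is genuinely $P(n,s)$ and not something smaller --- that is, that we have not lost any $(x,X)\in P(n,s)$ by forcing $(y,Y)=\Phi(x,X)$ --- and this is guaranteed by the ``only if'' direction of Lemma~\ref{complementary_lift}, since $\Phi(x,X)\in Q(n,n-s)$ whenever $(x,X)\in P(n,s)$. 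One may also remark that equality (rather than just $\ge$) would require $\Phi$ to be onto the relevant part of the feasible region, which is true by invertibility of $\Phi$ together with Lemma~\ref{complementary_lift}; but only the inequality is claimed, and that is all we need.
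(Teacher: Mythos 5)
Your proof is correct and follows essentially the same route as the paper: fix $(y,Y)=\Phi(x,X)$ using Lemma~\ref{complementary_lift} to guarantee feasibility in $Q(n,n-s)$, observe the substituted program is exactly the $\check{v}$ program, and conclude by the restriction argument. Your side remark is also apt: Lemma~\ref{complementary_valid} is not needed for the inequality itself, only to justify that the added equations keep $\check{v}$ a valid (strengthened) upper bound for $z(C,s)$, which is why the paper cites both lemmas.
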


The result follows from Lemma \ref{complementary_lift} and the following simple lemma.
\begin{lem}\label{complementary_valid}
For the solutions of $x+y=e$, $X=xx'$, $Y=yy'$, the equations $Y=X+ee'-ex'-xe'$ are  valid.
\end{lem}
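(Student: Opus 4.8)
The plan is to verify the identity $Y = X + ee' - ex' - xe'$ directly by substituting $X = xx'$, $Y = yy'$, and $y = e - x$. First I would write $yy' = (e-x)(e-x)' = ee' - ex' - xe' + xx'$, and then observe that this equals exactly $X + ee' - ex' - xe'$ once we use $X = xx'$. That is the whole computation; the lemma is a one-line algebraic identity among rank-one matrices.

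The only thing worth being careful about is the logical framing: the lemma asserts validity \emph{for the solutions of} the system $x + y = e$, $X = xx'$, $Y = yy'$ — i.e., on the affine variety cut out by these equations — so the proof should say "given any $(x,X,y,Y)$ satisfying those three (families of) equations, we have..." and then present the substitution. No feasibility of $P(n,s)$ or $Q(n,n-s)$ is needed here; this is purely about the defining equations of the binary points before relaxation. I would also note in passing the scalar form, $Y_{ij} = X_{ij} + 1 - x_i - x_j$, which matches the observation already recorded just after the definition of $\Phi$, tying the lemma to the map $\Phi$ used in Lemma~\ref{complementary_lift}.

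There is essentially no obstacle: the "hard part," such as it is, is simply being explicit that $ee' - ex' - xe' + xx'$ is literally the expansion of $(e-x)(e-x)'$, and that replacing the $xx'$ term by $X$ (licensed by $X = xx'$) produces the claimed right-hand side. Combining this with Lemma~\ref{complementary_lift} — which guarantees that choosing $(y,Y) = \Phi(x,X)$ keeps us feasible in $Q(n,n-s)$ whenever $(x,X) \in P(n,s)$ — then yields Proposition~\ref{prop:strengthenBQPm}: we may restrict the feasible region of the mBQP formulation to the subset where $(y,Y) = \Phi(x,X)$, which can only decrease the max, and on that subset the objective becomes exactly $\check v(C,s;\alpha,\gamma_1,\gamma_2)$ after substituting $Y = X + ee' - ex' - xe'$ and $y = e - x$.

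\begin{proof}
Suppose $x + y = e$, $X = xx'$, and $Y = yy'$. Then
\[
Y = yy' = (e-x)(e-x)' = ee' - ex' - xe' + xx' = X + ee' - ex' - xe',
\]
using $X = xx'$ in the last step. Equivalently, $Y_{ij} = X_{ij} + 1 - x_i - x_j$ for all $i,j$.
\qed
\end{proof}
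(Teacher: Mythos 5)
Your proof is correct and follows essentially the same route as the paper's: expand $(e-x)(e-x)'$ under the substitution $y=e-x$ and use $X=xx'$ (the paper phrases this as writing $0=Y-yy'$ and subtracting $0=X-xx'$, but the algebra is identical). Your added remarks on the scalar form and the link to $\Phi$ and Proposition~\ref{prop:strengthenBQPm} are consistent with how the paper uses the lemma.
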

\begin{proof}
Under $x+y=e$, we have that
\[
0 = Y-yy' = Y-(e-x)(e-x)'=Y-ee'+ex'+xe'-xx'~.
\]
Subtracting $0=X-xx'$, we obtain the desired equations.
\qed
\end{proof}

We experimented further with the ``$n=63$''  covariance matrix.
Considering now Figure \ref{graph_improvement63},
the unmixed bounds are indicated by the lines
for ``$\alpha=0$'' and ``$\alpha=1$''. We optimized the $\gamma_i$
for these bounds (see \S\ref{sec:choosingparams}). We chose an interesting range of $s$,
where the unmixed bounds transition between which is stronger
(i.e., the lines cross). The line indicated by ``$\alpha^*$''
is the optimal mixing of the BQP bound and its complement.
Note that we only optimized $v(C,s;\alpha,\gamma_1,\gamma_2)$ on $\alpha$, keeping the
optimal $\gamma_i$ from the unmixed bounds.
A (probably small) further improvement could be obtained by
iterating between optimizing on $\alpha$ and the $\gamma_i$.
The line indicated by ``$\alpha^* \mbox{ strengthened}$''
is the  optimal mixing of the BQP bound and its complement, but now with the
valid equations in the extended space.
Note that again we only optimized $\check{v}(C,s;\alpha,\gamma_1,\gamma_2)$
 on $\alpha$, keeping the optimal $\gamma_i$ from the unmixed bounds.

We can seek to improve the mBQP bound
 by adding RLT, triangle and other inequalities, valid for the BQP,
for both $(x,X)$ and  $(y,Y)$.
We could do it directly (like \cite{Anstreicher_BQP_entropy}),
but the conic-bundle  method (see \cite{Fischer}) seems more promising,
due to the large number of inequalities to be potentially exploited.
So we   dynamically include triangle inequalities via a bundle method; specifically we use the solver SDPT3 (see \cite{SDPT3}) together with the Conic Bundle Library (see \cite{CB}) for solving the associated semidefinite programs, as described in \cite{BELW17}. In the figure,  the line ``$\alpha^* \mbox{ strengthened } + \mbox{ triangles}$'' indicates the bound obtained.

\begin{figure}[ht!]
   \centering
   \includegraphics[width=\textwidth]{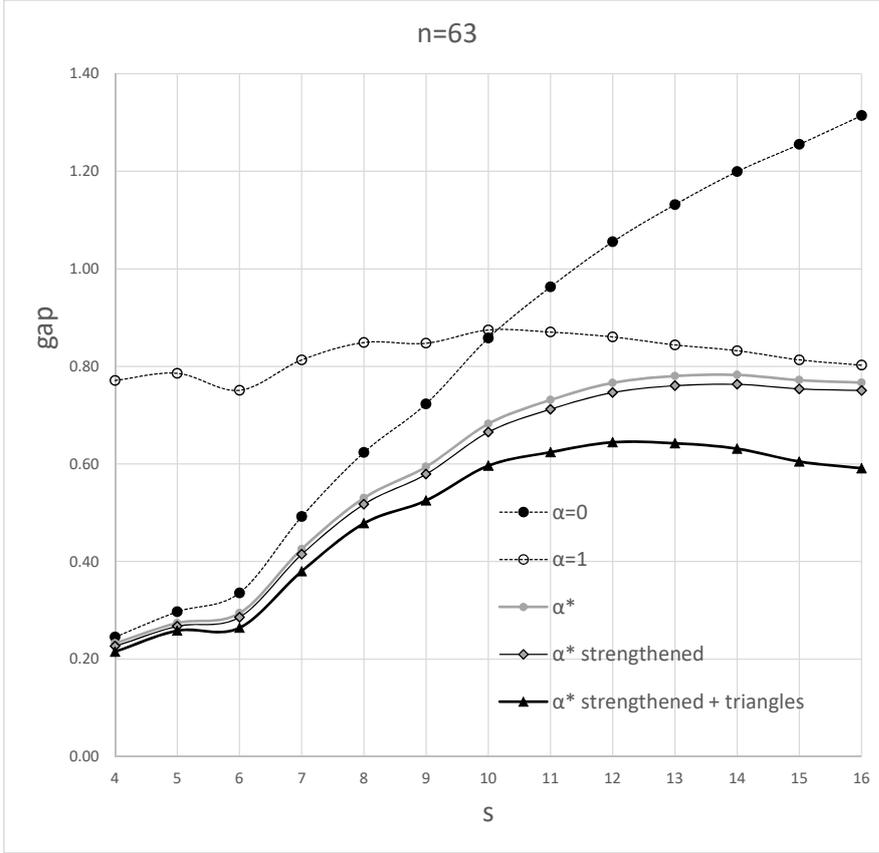}
   \caption{Gap vs. $s$ (optimized $\alpha$ and $\gamma_i$)}\label{graph_improvement63}
\end{figure}

We repeated this experiment for a the well-known larger ``$n=124$'' benchmark covariance matrix.
The results, exhibiting a similar behavior, are indicated in Figure \ref{graph_improvement124}.
Note that in this figure, gaps are to a lower bound generated by a heuristic.

\begin{figure}[ht!]
   \centering
   \includegraphics[width=\textwidth]{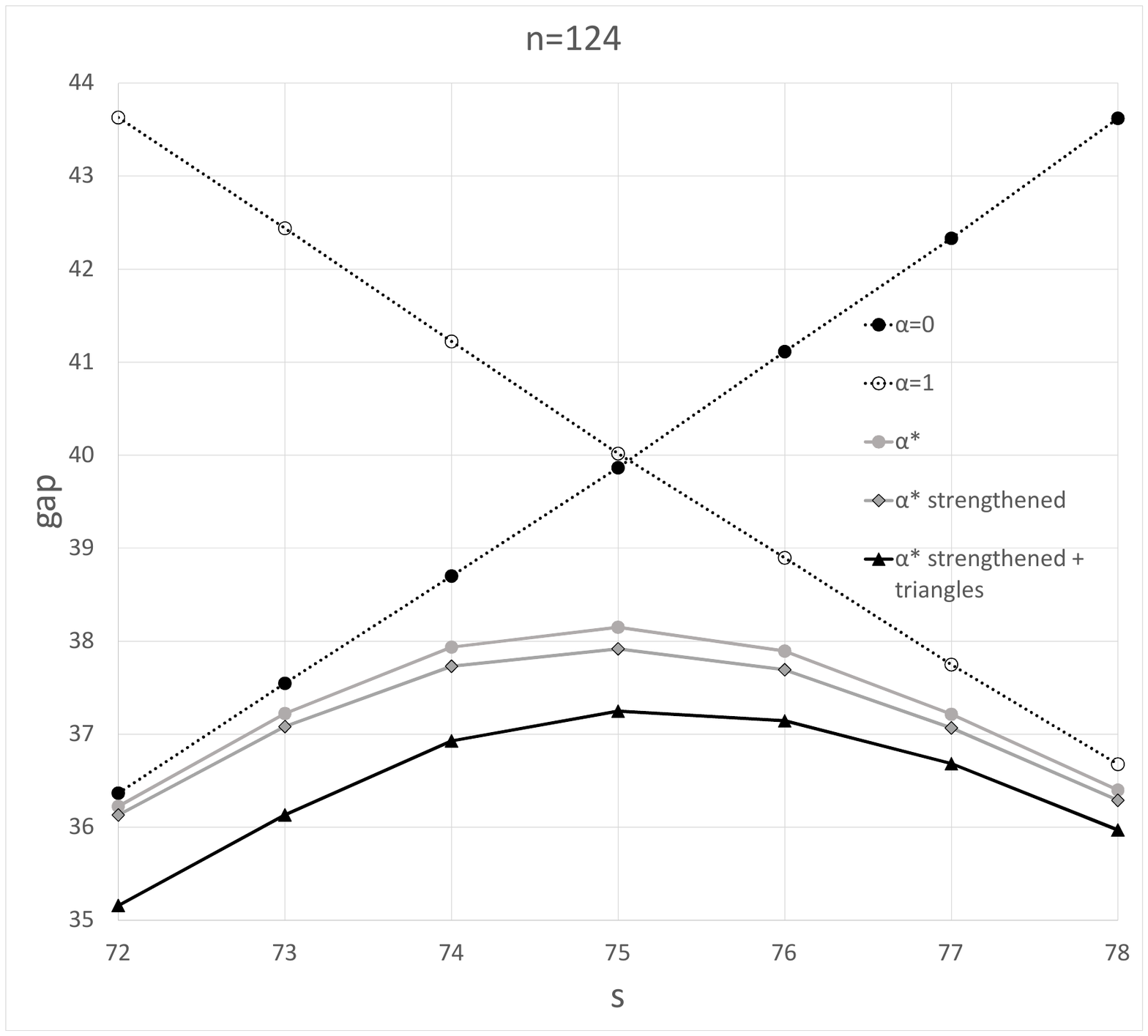}
   \caption{Gap vs. $s$ (optimized $\alpha$ and $\gamma_i$)}\label{graph_improvement124}
\end{figure}

\subsection{Choosing good parameters ($\alpha,\gamma_1,\gamma_2$)}\label{sec:choosingparams}

Toward designing a reasonable algorithm for minimizing $\check{v}(C,s;\alpha,\gamma_1,\gamma_2)$,
over $\alpha\in[0,1]$ and $\gamma_1,\gamma_2>0$,
we establish convexity properties.

\subsubsection{Convexity properties}

\begin{thm}
\label{thmconvexity}
For fixed $\gamma_1,\gamma_2>0$, the
function $\check{v}(C,s;\alpha,\gamma_1,\gamma_2)$ is convex in $\alpha\in[0,1]$.
For fixed $\alpha\in[0,1]$, the function $\check{v}(C,s;\alpha,\exp(\psi_1),\exp(\psi_2))$
is jointly convex in $(\psi_1,\psi_2)\in\mathbb{R}^2$.
\end{thm}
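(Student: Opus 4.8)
The plan is to reduce both statements to known facts about the structure of the objective $\check v$. Recall from Proposition~\ref{prop:strengthenBQPm} that $\check v(C,s;\alpha,\gamma_1,\gamma_2)$ is the maximum over $(x,X)\in P(n,s)$ of a function of the form
\[
(1-\alpha)\,g_1(x,X;\gamma_1) + \alpha\,g_2(x,X;\gamma_2),
\]
where $g_1(x,X;\gamma_1):=\ldet(\gamma_1 C\circ X + I - \diag(x)) - s\log\gamma_1$ and $g_2(x,X;\gamma_2):=\ldet(\gamma_2 C^{-1}\circ(X+ee'-ex'-xe') + I - \diag(e-x)) - (n-s)\log\gamma_2 + \ldet C$. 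The key observation, which the paper already uses in \S\ref{sec:general}, is that a pointwise maximum (over any fixed index set, here the feasible region $P(n,s)$, which does not depend on the parameters) of functions that are affine (resp.\ convex) in the parameter is itself convex in the parameter.

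For the first claim, fix $\gamma_1,\gamma_2>0$. Then for each fixed feasible $(x,X)$, the integrand $(1-\alpha)g_1 + \alpha g_2 = g_1 + \alpha(g_2-g_1)$ is affine — hence convex — in $\alpha$. Taking the supremum over $(x,X)\in P(n,s)$ preserves convexity, so $\check v(C,s;\cdot,\gamma_1,\gamma_2)$ is convex on $[0,1]$. This is essentially the argument behind the unnumbered Proposition in \S\ref{sec:general}, specialized to $m=2$ and the particular lifts; no new work is needed beyond noting that eliminating $(y,Y)$ via Lemma~\ref{complementary_valid} does not disturb this structure.

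For the second claim, fix $\alpha\in[0,1]$ and substitute $\gamma_i=\exp(\psi_i)$. Again by the pointwise-max principle, it suffices to show that for each fixed feasible $(x,X)$, the map $(\psi_1,\psi_2)\mapsto (1-\alpha)g_1(x,X;e^{\psi_1}) + \alpha g_2(x,X;e^{\psi_2})$ is jointly convex. Since the two summands depend on disjoint variables ($\psi_1$ only, $\psi_2$ only) with nonnegative coefficients, it is enough to show each of $\psi\mapsto g_1(x,X;e^{\psi})$ and $\psi\mapsto g_2(x,X;e^{\psi})$ is convex in the single variable $\psi$. The $-s\log\gamma_1 = -s\psi_1$ (resp.\ $-(n-s)\psi_2$) and $\ldet C$ pieces are affine, so the content is the convexity of $\psi\mapsto \ldet(e^{\psi} M + D)$, where $M:=C\circ X\succeq 0$ (a Hadamard product of PSD matrices, by Schur) and $D:=I-\diag(x)\succeq 0$ since $x\in[0,1]^n$ on $P(n,s)$; the second summand is analogous with $M:=C^{-1}\circ(X+ee'-ex'-xe')$, which by Lemma~\ref{complementary_lift} equals $C^{-1}\circ \hat Y\succeq 0$, and $D:=I-\diag(e-x)\succeq 0$.

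The main obstacle, and the only real calculation, is thus the scalar claim: for PSD $M,D$ with $M+D\succ 0$ on the relevant domain, $h(\psi):=\ldet(e^\psi M + D)$ is convex in $\psi\in\mathbb R$. I would prove this by differentiating twice: with $A(\psi):=e^\psi M + D\succ 0$ and $A'(\psi)=e^\psi M=:B\succeq 0$, one has $h'(\psi)=A^{-1}\bullet B$ and
\[
h''(\psi) = A^{-1}\bullet B - (A^{-1}BA^{-1})\bullet B = A^{-1}\bullet B - \Trace(A^{-1}B A^{-1}B).
\]
Writing $B = A^{1/2} R A^{1/2}$ with $R:=A^{-1/2}BA^{-1/2}\succeq 0$ having eigenvalues $\rho_k\in[0,1)$ (because $B=e^\psi M \preceq e^\psi M + D = A$, so $R\preceq I$), this becomes $h''(\psi)=\sum_k(\rho_k-\rho_k^2)=\sum_k\rho_k(1-\rho_k)\ge 0$. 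Hence $h$ is convex. (One should note that $A\succ 0$ throughout: $e^\psi M + D$ has the determinant formula valid since on $P(n,s)$ this matrix is the BQP-bound matrix, known to be positive definite for the bound to be well-defined; alternatively invoke that the objective of $\check v$ is finite.) Combining the scalar convexity with the pointwise-maximum argument gives both assertions of the theorem. A cleaner packaging, if preferred, is to cite that $\ldet$ is concave and matrix-increasing on the PSD cone while $\psi\mapsto e^\psi M + D$ is ``matrix-convex'' along the ray, but the two-line eigenvalue computation above is self-contained and is the step I would actually write out.
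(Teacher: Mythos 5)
Your proof is correct, and its skeleton matches the paper's: write $\check v$ as a pointwise maximum over the parameter-free feasible set $P(n,s)$, note the integrand is affine in $\alpha$ for fixed $(x,X)$ (giving the first claim), and reduce joint convexity in $(\psi_1,\psi_2)$ to one-variable convexity of $f_1$ in $\psi_1$ and $f_2$ in $\psi_2$, since these occur in disjoint variables with nonnegative weights. Where you genuinely differ is the key one-variable step. The paper differentiates $f_1$ twice with respect to $\gamma_1$, transfers to $\psi_1=\log\gamma_1$ by the chain rule, and arrives at $\partial^2 f_1/\partial\psi_1^2=\exp(\psi_1)(e-x)'\diag\bigl(F_1^{-1}(C\circ X)F_1^{-1}\bigr)$, whose nonnegativity follows from $e-x\geq 0$, the Schur product theorem, and the nonnegativity of the diagonal of a PSD matrix. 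You instead isolate the general lemma that $\psi\mapsto\ldet(e^{\psi}M+D)$ is convex for $M,D\succeq 0$ with $e^{\psi}M+D\succ 0$, differentiate directly in $\psi$, and certify $h''\geq 0$ through the eigenvalues of $R=A^{-1/2}BA^{-1/2}\preceq I$. The two computations are in fact the same quantity: with $B=A-D$,
\[
\Trace(A^{-1}B)-\Trace(A^{-1}BA^{-1}B)=\Trace\bigl(DA^{-1}BA^{-1}\bigr)=(e-x)'\diag\bigl(A^{-1}(\gamma_1 C\circ X)A^{-1}\bigr),
\]
so your formula reproduces the paper's second derivative, certified by a spectral argument rather than a sign argument on a weighted diagonal. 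Your packaging is somewhat more general and reusable (it uses nothing about $D$ beyond $D\succeq 0$), at the price of the congruence step; the paper's is more computational but stays in the concrete matrices $F_i$, which it then reuses verbatim in the Newton iterations of \S 2.3.2. Two small points: the eigenvalues of $R$ lie in $[0,1]$, not necessarily $[0,1)$ (harmless, since $\rho(1-\rho)\geq 0$ either way), and positive definiteness of $F_1,F_2$ on $P(n,s)$ deserves a sentence rather than an appeal to "the bound is well-defined" --- though it does always hold there, and the paper is equally terse on this point.
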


\begin{proof}

We already know from general principles that our mixing bounds are convex in $\alpha$.
So in this section, we begin by establishing joint convexity in the
logarithms of the scaling parameters
$\gamma_1,\gamma_2$.

Let
\begin{align}
&F_1(C,s;\gamma_1,(x,X)):= (\gamma_1C-I)\circ X +I =\gamma_1C\circ X +I - \diag(x),\label{f1}\\
&F_2(C,s;\gamma_2,(x,X)):=\gamma_2C^{-1}\circ (X + ee' - ex'-xe') + \diag(x),\label{f2}
\end{align}
\begin{align*}
&f_1(C,s;\gamma_1,(x,X)):=\ldet F_1(C,s;\gamma_1,(x,X)) - s\log \gamma_1,\\
&f_2(C,s;\gamma_2,(x,X)):=\ldet F_2(C,s;\gamma_2,(x,X)) - (n-s)\log \gamma_2 + \ldet C,
\end{align*}
\[
f(C,s;\alpha,\gamma_1,\gamma_2,(x,X)):= (1-\alpha) f_1(C,s;\gamma_1,(x,X)) + \alpha f_2(C,s;\gamma_2,(x,X)).
\]
So, with this notation,
\[
 \check{v}(C,s;\alpha,\gamma_1,\gamma_2)
= \max_{(x,X)\in P(n,s)}
(1-\alpha) f_1(C,s;\gamma_1,(x,X)) + \alpha f_2(C,s;\gamma_2,(x,X)).
\]

The function  $\check{v}(C,s;\alpha,\exp(\psi_1),\exp(\psi_2))$ is the point-wise maximum
of $f(C,s;\alpha\mathbin{,}\exp(\psi_1),\exp(\psi_2),(x,X))$, over
$(x,X)\in P(n,s)$. So it suffices to show that
$f(C,s;\alpha,\exp(\psi_1),\exp(\psi_2),(x,X))$ is itself
 convex for each fixed $(x,X)\in P(n,s)$.

In what follows, for $i=1,2$, we use $f_i$ as a short form for  $f_i(C,s;\gamma_i,  (x,X))$,
and we use $F_i(\gamma_i,(x,X))$ as a short form for $F_i(C,s;\gamma_i,(x,X))$.
We have
\begin{align*}
&\frac{\partial  f_1}{\partial\gamma_1} =\frac{\partial}{\partial\gamma_1}\left(\ldet F_1(\gamma_1,(x,X)) - s\log \gamma_1\right)\\
&\quad =\frac{\partial}{\partial\gamma_1}\left(\ldet (\gamma_1C\circ X +I - \diag(x)) - s\log \gamma_1\right)\\
&\quad =F_1(\gamma_1,(x,X)) ^{-1}\bullet (C\circ X ) -\frac{s}{\gamma_1}\\
&\quad =\frac{1}{\gamma_1}\left(F_1(\gamma_1,(x,X)) ^{-1}\bullet (\gamma_1C\circ X ) -s\right)\\
&\quad =\frac{1}{\gamma_1}\left(F_1(\gamma_1,(x,X)) ^{-1}\bullet  F_1(\gamma_1,  (x,X)) - F_1(\gamma_1, (x,X)) ^{-1}\bullet (I - \diag(x)) -s\right)\\
&\quad =\frac{1}{\gamma_1}\left(n - s-F_1(\gamma_1,(x,X)) ^{-1}\bullet (I - \diag(x)) \right)~.
\end{align*}

Letting $\psi_1:=\log\gamma_1$, by the chain rule we have
\[
\frac{\partial f_1}{\partial\gamma_1} = \frac{\partial f_1}{\partial\psi_1} \frac{d\psi_1}{d\gamma_1}= \frac{\partial f_1}{\partial\psi_1}\frac{1}{\gamma_1}~.
\]
So we have
\begin{align*}
\frac{\partial f_1}{\partial\psi_1} &=\gamma_1\frac{\partial f_1}{\partial \gamma_1} = n - s-F_1(\exp(\psi_1),(x,X)) ^{-1}\bullet (I - \diag(x)) =:g_1(\gamma_1)~.
\end{align*}

Next, we calculate
\begin{align*}
\frac{\partial^2 f_1}{\partial\gamma_1^2} &=\frac{\partial}{\partial\gamma_1}\left(\frac{1}{\gamma_1}\left(n - s-F_1(\gamma_1,(x,X)) ^{-1}\bullet (I - \diag(x)) \right)\right)\\
&= -\frac{1}{\gamma_1^2}\left(n - s-F_1(\gamma_1,(x,X)) ^{-1}\bullet (I - \diag(x)) \right) \\
&\quad + \frac{1}{\gamma_1}(e-x)'\diag(F_1(\gamma_1,(x,X)) ^{-1}(C\circ X)F_1(\gamma_1,(x,X)) ^{-1} )~.\\
\end{align*}

So we have
\begin{align*}
\gamma_1^2\frac{\partial^2 f_1}{\partial\gamma_1^2}
&= -n + s+F_1(\gamma_1,(x,X)) ^{-1}\bullet (I - \diag(x))  \\
&\quad + \gamma_1(e-x)'\diag(F_1(\gamma_1,(x,X)) ^{-1}(C\circ X)F_1(\gamma_1,(x,X)) ^{-1} ) ~.
\end{align*}

Finally, again taking $\psi_1:=\log \gamma_1$, using the chain rule we have
\begin{align*}
&\frac{\partial^2 f_1}{\partial\psi_1^2}=\frac{\partial g_1}{\partial \psi_1} =\gamma_1\frac{\partial g_1}{\partial \gamma_1} = \gamma_1\left(\frac{\partial f_1}{\partial \gamma_1} + \gamma_1\frac{\partial^2 f_1}{\partial \gamma_1^2}\right) = \gamma_1\frac{\partial f_1}{\partial \gamma_1} + \gamma_1^2\frac{\partial^2 f_1}{\partial \gamma_1^2} \nonumber\\
&\quad =  n - s-F_1(\exp(\psi_1),(x,X)) ^{-1}\bullet (I - \diag(x))\nonumber\\
&\quad   -n + s+F_1(\exp(\psi_1),(x,X)) ^{-1}\bullet (I - \diag(x))  \nonumber\\
&\quad + \exp(\psi_1)(e-x)'\diag(F_1(\exp(\psi_1),(x,X)) ^{-1}(C\circ X)F_1(\exp(\psi_1),(x,X)) ^{-1} ) \nonumber\\
&\quad =  \exp(\psi_1)(e-x)'\diag(F_1(\exp(\psi_1),(x,X)) ^{-1}(C\circ X)F_1(\exp(\psi_1),(x,X)) ^{-1} ) ~.
\end{align*}

It remains to demonstrate that this last expression is nonnegative.
We have $C\succ 0$ and $X\succeq 0$, and therefore $C\circ X\succeq 0$ (see \cite[page 175]{SchurBook}). Then, it is also clear from \eqref{f1} that  $F_1(\exp(\psi_1,(x,X))\succ 0$.
Therefore
\[
F_1(\exp(\psi_1), (x,X)) ^{-1}(C\circ X)F_1(\exp(\psi_1), (x,X)) ^{-1} \succeq 0.
\]
So we have
\[
\frac{\partial^2 f_1}{\partial\psi_1^2} \geq 0, 
\]
and we can conclude that $f_1(\exp(\psi_1), (x,X))$ is convex in $\psi_1$.

Similarly,  $f_2(\exp(\psi_2), (x,X))$ is convex in $\psi_2$.
Finally,  for fixed $\alpha$ and
$(x,X)$, $F(\alpha,\exp(\psi_1),\exp(\psi_2),(x,X))$ is
jointly convex in $\psi_1$ and $\psi_2$ because it is a weighted sum of
$f_1(\exp(\psi_1), (x,X))$ and $f_2(\exp(\psi_1), (x,X))$.
\qed
\end{proof}

\begin{rem}
By working with the $\psi_i:=\log(\gamma_i)$ and establishing convexity, we are able to
rigorously find the best values of the $\gamma_i$.
\cite{Anstreicher_BQP_entropy} does not work that way.
Working directly with the scaling parameters, $\gamma_i$,
(of course separately for the BQP bound and the complementary BQP bound),
he heuristically sought good values for the $\gamma_i$.
\end{rem}

\subsubsection{Optimizing the parameters}

The (strengthened) mBQP bound
depends on the parameters $(\alpha, \gamma_1,\gamma_2)$.
We do not have any type of full joint convexity.
But based on Theorem \ref{thmconvexity},
to find a good upper bound, we are motivated to formulate two convex problems.

First, for given $\hat{\psi}_1$ and $\hat{\psi}_2$, we consider the convex optimization problem
\begin{equation}
\label{p1a}
\min \{V_{\hat{\psi}_1,\hat{\psi}_2}(\alpha) ~:~ \alpha\in[0,1]\}~,
\end{equation}
where
\[
\begin{array}{lrl}
V_{\hat{\psi}_1,\hat{\psi}_2}(\alpha)&:=&\check{v}(C,s;\alpha,\exp(\hat{\psi}_1),\exp(\hat{\psi}_2))\\
&=&(1-\alpha) f_1(C,s;\exp(\hat{\psi}_1),(x^*,X^*)) + \alpha f_2(C,s;\exp(\hat{\psi}_2),(x^*,X^*)),
\end{array}
\]
and $(x^*,X^*)=(x^*(\alpha),X^*(\alpha))$ solves the maximization problem in Proposition  \ref{prop:strengthenBQPm} for the given $\alpha$, when $\gamma_1=\exp(\hat{\psi}_1)$, and $\gamma_2=\exp(\hat{\psi}_2)$.

Next, for $i=1,2$, we use $f_i^*(\alpha)$ as a short form for  $f_i(C,s;\exp(\hat{\psi}_i),(x^* ,X^*))$.

We  solve  (\ref{p1a})  with a primal-dual interior-point method,  considering the following  barrier problem
\begin{equation}
\label{barrierproba}
\min \left\{
V_{\hat{\psi}_1,\hat{\psi}_2}(\alpha) -\mu ( \log(\alpha)+\log(1-\alpha)) ~:~ \alpha\in(0,1) \right\},
\end{equation}
where $\mu>0$ is the barrier parameter. Let
\[
L_{\mu,\hat{\psi}_1,\hat{\psi}_2} (\alpha) := V_{\hat{\psi}_1,\hat{\psi}_2}(\alpha) -\mu ( \log(\alpha) + \log(1-\alpha))~.
\]

We motivate our algorithm, by assuming some differentiability.
The optimality conditions for the barrier problem  is obtained by differentiating  $L_{\mu,\hat{\psi}_1,\hat{\psi}_2}$ with respect to $\alpha$,
and can be written as
\[
G_{\mu,\hat{\psi}_1,\hat{\psi}_2}( \alpha) :=  \frac{\partial  V_{\hat{\psi}_1,\hat{\psi}_2}(\alpha) }{ \partial  \alpha}  - \frac{\mu}{\alpha}  + \frac{\mu}{1-\alpha}~=~0~.
\]

We aim at improving the mBQP bound by taking Newton steps to solve the  nonlinear equation above. The search direction $\delta_\alpha$,  is defined by
\[
H_{G_{\mu,\hat{\psi}_1,\hat{\psi}_2}}( \alpha)
(\delta_\alpha)= - G_{\mu,\hat{\psi}_1,\hat{\psi}_2} ( \alpha)~,
\]
where
\[
H_{G_{\mu,\hat{\psi}_1,\hat{\psi}_2}}( \alpha)=
\frac{\partial^2  V_{\hat{\psi}_1,\hat{\psi}_2}(\alpha) }{ \partial  \alpha^2}   + \frac{\mu}{\alpha^2} + \frac{\mu}{(1-\alpha)^2}~.
\]

We note that
\[
\frac{\partial  V_{\hat{\psi}_1,\hat{\psi}_2}(\alpha) }{ \partial  \alpha}  =  -f_1^*(\alpha) +
(1-\alpha)\frac{{\color{black} \partial}  f_1^*(\alpha)}{{\color{black} \partial}  \alpha} + f_2^*(\alpha) + \alpha
\frac{{\color{black} \partial}
f_2^*(\alpha)}{{\color{black} \partial}  \alpha} ~.
\]
However, we cannot analytically compute   $\partial  f_i^*(\alpha)/\partial  \alpha$, for $i=1,2$.
Indeed, we do not even know that the $f_i^*$ are differentiable.
In the implementation of the interior-point method, we consider the following approximations:
\begin{equation}
\label{gradap}
\begin{array}{rrr}
-f_1^*(\alpha) +(1-\alpha)\frac{\partial  f_1^*(\alpha)}{\partial  \alpha} &\approx& -f_1^*(\alpha)~,\\[5pt]
f_2^*(\alpha) +\alpha\frac{\partial  f_2^*(\alpha)}{\partial  \alpha} &\approx& f_2^*(\alpha)~.\\
\end{array}
\end{equation}
We then approximate the second partial derivative $\partial^2  V_{\hat{\psi}_1,\hat{\psi}_2}(\alpha)/ \partial  \alpha^2$, also considering \eqref{gradap}. At the first iteration of the interior-point method, we approximate it by $b_0=1$, and in iteration $k\geq 0$, we compute
\[
b_{k+1}=
-\frac{ \Delta  f_1^*(\alpha)^{k+1}}{\Delta  \alpha}  +  \frac{ \Delta  f_2^*(\alpha)^{k+1}}{ \Delta  \alpha}~,
\]
where, for $i=1,2$,  $\Delta f_i^*(\alpha)^{k+1}/ \Delta \alpha :=  (f_i^*(\alpha^{k+1} )-  f_i^*(\alpha^{k})/(\alpha^{k+1} -  \alpha^{k})$  is the finite-difference approximation of the first-order partial derivative  $\partial f_i^*(\alpha)/\partial \alpha$. Following what is commonly applied in a  BFGS scheme, we update the approximation of the second partial derivative at iteration $k$ only if $b_{k+1}$ is nonnegative.

We emphasize that  to compute the search direction at each iteration of the interior-point method, we need to compute $f_i^*(\alpha)$, $i=1,2$, and therefore we need the optimal solution $(x^*,X^*)=(x^*(\alpha),X^*(\alpha))$ of the (strengthened) mBQP relaxation for the current $\alpha$, when $\gamma_1=\exp(\hat{\psi}_1)$, and $\gamma_2=\exp(\hat{\psi}_2)$. The relaxation is thus solved at each iteration of the algorithm, each time for a new $\alpha$. As $\alpha$ is a real variable, the time to minimize the mBQP bound is dominated by solving mBQP relaxations, the remaining
effort for computing the bound is negligible.

In Algorithm \ref{alg:ipm_iteration}, we present, in detail, an iteration of the interior-point method. The iteration presented is repeated for a fixed value of the barrier parameter
 $\mu$, for a prescribed number of  times or until the absolute value of the residual $r$ is small enough.  The parameter  $\mu$ is then reduced and the process repeated, until $\mu$ is also small enough.

\begin{algorithm}[!ht]
\caption{Updating  $\alpha$, with $\gamma_1=\exp(\hat{\psi}_1)$, $\gamma_2=\exp(\hat{\psi}_2$) }
\label{alg:ipm_iteration}
\begin{quote}
Input:  $k$, $\alpha^{k}$, $(x^*(\alpha^k),X^*(\alpha^{k}))$,  $f_1^*(\alpha^k)$, $f_2^*(\alpha^k)$, $b_{k}$, $\mu^k$, $\tau_{\alpha}:=0.9$, $\tau_{\mu}:=0.1$.\\
Compute the residual:
\[
r:=  -f_1^*(\alpha^k) + f_2^*(\alpha^k) - \frac{\mu^k}{\alpha^k}  + \frac{\mu^k}{1-\alpha^k}.
\]
\\
Compute the search direction $\delta_\alpha$:
\[
\delta_\alpha=-r/ \left(b_k   + \frac{\mu^k}{(\alpha^k)^2} + \frac{\mu^k}{(1-\alpha^k)^2}\right).
\]
\\
Update $\alpha$:
\[
\alpha^{k+1} := \alpha^{k} + \hat{\theta} \delta_\alpha,
\]
where
\[
\textstyle \hat{\theta}:=\tau_{\alpha}\times\min\{1,\mbox{argmax}_{\theta}\{ \alpha^{k} + \theta\delta_\alpha \geq 0\}, \mbox{argmax}_{\theta}\{ \alpha^{k} + \theta\delta_\alpha \leq 1\}\}.
\]
\\
Obtain the optimal solution $(x^*(\alpha^{k+1}),X^*(\alpha^{k+1}))$ of the mBQP relaxation, considering $\alpha:=\alpha^{k+1}$,  $\gamma_1:=\exp(\hat{\psi}_1)$, and $\gamma_2:=\exp(\hat{\psi}_2)$.
\\
For $i=1,2$, set:
\begin{align*}
&f_i^*(\alpha^{k+1}):=f_i(C,s;\exp(\hat{\psi}_i),(x^*(\alpha^{k+1}), X^*(\alpha^{k+1})), \\
&\Delta_i :=    (f_i^*(\alpha^{k+1} )-  f_i^*(\alpha^{k})/(\alpha^{k+1} -  \alpha^{k}).
\end{align*}
\If {( $-\Delta_1 + \Delta_2 > 0$) } {\[b_{k+1} = -\Delta_1 + \Delta_2,\]}
\Else{ \[b_{k+1}:= b_k.\]}
Output: $\alpha^{k+1}$, $(x^*(\alpha^{k+1}),X^*(\alpha^{k+1}))$, $f_1^*(\alpha^{k+1})$, $f_2^*(\alpha^{k+1})$, $b_{k+1}$.
\end{quote}
\end{algorithm}

\bigskip

In what follows, we also define  for  given $\hat{\alpha}\in[0,1]$,  the convex problem
\begin{equation}
\label{p1psi}
\min \{ V_{\hat{\alpha}}(\psi_1,\psi_2) ~:~  (\psi_1,\psi_2)\in\mathbb{R}^2 \}~,
\end{equation}
\begin{align*}
&V_{\hat{\alpha}}(\psi_1,\psi_2):=\check{v}(C,s;\hat{\alpha}, \exp(\psi_1),\exp(\psi_2))\\
&\quad =(1-\hat{\alpha}) f_1(C,s;\exp(\psi_1),(x^*,X^*)) + \hat{\alpha} f_2(C,s;\exp(\psi_2),(x^*,X^*)),
\end{align*}
and $(x^*,X^*)=(x^*(\psi_1,\psi_2),X^*(\psi_1,\psi_2))$ solves the maximization problem in Proposition  \ref{prop:strengthenBQPm} for  $\alpha=\hat{\alpha}$,    $\gamma_1=\exp(\psi_1)$, and $\gamma_2=\exp(\psi_2)$.

Next, for $i=1,2$, we use
$f_i^*(\psi_1,\psi_2)$ as a short form for  $f_i(C,s;\exp(\psi_i)\mathbin{,}(x^*,X^*))$,
and we use $F_i$ as a short form for $F_i(C,s;\exp(\psi_i),(x^*,X^*))$.

The optimality condition for  \eqref{p1psi}  is given by
\[
\left\{\begin{array}{ll}
\frac{\partial V_{\hat{\alpha}}(\psi_1,\psi_2)}{\partial \psi_1} \,=\,   (1-\hat{\alpha}) \frac{\partial f_1^*(\psi_1,\psi_2)}{\partial \psi_1} +  \hat{\alpha}\frac{\partial f_2^*(\psi_1,\psi_2)}{\partial \psi_1} & =0~, \\[5pt]
\frac{\partial V_{\hat{\alpha}}(\psi_1,\psi_2)}{\partial \psi_2} \,=\,  (1-\hat{\alpha}) \frac{\partial f_1^*(\psi_1,\psi_2)}{\partial \psi_2} +  \hat{\alpha}\frac{\partial f_2^*(\psi_1,\psi_2)}{\partial \psi_2} & =0~.\\
\end{array} \right.
\]
Here  we should observe that we cannot analytically compute   $\partial f_1^*(\psi_1,\psi_2)/\partial  \psi_2$ nor $\partial f_2^*(\psi_1,\psi_2)/\partial  \psi_1$.
Again, we cannot even be sure that these derivatives exist.
In the implementation of the interior-point method, we consider the following approximations:
\begin{equation}
\label{aprox2}
\left\{\begin{array}{ll}
  (1-\hat{\alpha}) \frac{\partial f_1^*(\psi_1,\psi_2)}{\partial \psi_1} +  \hat{\alpha}\frac{\partial f_2^*(\psi_1,\psi_2)}{\partial \psi_1} &\approx   (1-\hat{\alpha}) \frac{\partial f_1^*(\psi_1,\psi_2)}{\partial \psi_1}~, \\[5pt]
 (1-\hat{\alpha}) \frac{\partial f_1^*(\psi_1,\psi_2)}{\partial \psi_2} +  \hat{\alpha}\frac{\partial f_2^*(\psi_1,\psi_2)}{\partial \psi_2} & \approx   \hat{\alpha}\frac{\partial f_2^*(\psi_1,\psi_2)}{\partial \psi_2}~.\\
\end{array} \right.
\end{equation}

We obtain then the following approximation for the optimality conditions for \eqref{p1psi}:
\begin{equation}
\label{defG}
G_{\hat{\alpha}}(\psi_1,\psi_2):=\left(
\begin{array}{l}
n-s-F_1^{-1}\bullet(I-\diag(x^*))\\
s-F_2^{-1}\bullet \diag(x^*)
\end{array}
\right) = \left( \begin{array}{l} 0\\0 \end{array}
\right) .
\end{equation}
We aim now at improving the  bound by taking Newton steps to solve the  nonlinear system above. The search direction is defined by
\[
\nabla G_{\hat{\alpha}}(\psi_1,\psi_2)\left(\begin{array}{c}
 \delta_{\psi_1}\\ \delta_{\psi_2}
\end{array}\right)= - G_{\hat{\alpha}}(\psi_1,\psi_2),
\]
where,
\begin{align}
&\nabla  G_{\hat{\alpha}}(\psi_1,\psi_2)=\left(
\frac{\partial  G_{ \hat{\alpha}}(\psi_1,\psi_2)}{\partial \psi_1},\,
\frac{\partial  G_{\hat{\alpha}}(\psi_1,\psi_2)}{\partial \psi_2}
\right), \label{defnG}
\\
&\frac{\partial  G_{\hat{\alpha}}(\psi_1,\psi_2)}{\partial \psi_1}=\left(\begin{array}{c}
\exp(\psi_1)(e-x^*)'\diag\left(F_1^{-1}
(C\circ X^*)F_1^{-1}\right)\\
0
\end{array}\right),\nonumber
\\
&\frac{\partial  G_{\hat{\alpha}}(\psi_1,\psi_2)}{\partial \psi_2}\! = \!
\left(\begin{array}{c}
0\\
\exp(\psi_2){x^*}'\diag\left(F_2^{-1}(
C^{-1}\circ (X^* + ee' - e{x^*}'-x^*e')
F_2^{-1}\right)
\end{array}\right)\! . \nonumber
\end{align}

In Algorithm \ref{alg:newton_iteration}, we present  an iteration of the Newton method applied to update the parameters $\psi_1$ and $\psi_2$ in the mBQP relaxation.   The iteration presented  is repeated  for a prescribed number of  times or until the absolute value of the residuals, components of $G_{\hat{\alpha}}(\psi_1,\psi_2)$, are small enough.

\begin{algorithm}[!ht]
\caption{Updating  $\psi_1$, $\psi_2$, with $\alpha=\hat{\alpha}$ }
\label{alg:newton_iteration}
\begin{quote}
Input:  $k,  (x^*(\psi_1^{k},\psi_2^{k}),X^*(\psi_1^{k},\psi_2^{k})); \psi_i^{k}, F_i^{k}, f_i^*(\psi_1^{k},\psi_2^{k}), \, i=1,2$.\\
Compute $G_{\hat{\alpha}}(\psi_1^k,\psi_2^k)$ and  $\nabla G_{\hat{\alpha}}(\psi_1^k,\psi_2^k) $ as defined in \eqref{defG} and \eqref{defnG}.
\\
Solve the linear system to obtain the search direction  $(\delta_{\psi_1}, \delta_{\psi_2})'$:
\[
\nabla G_{\hat{\alpha}}(\psi_1^k,\psi_2^k)\left(\begin{array}{c}
 \delta_{\psi_1}\\ \delta_{\psi_2}
\end{array}\right)= - G_{\hat{\alpha}}(\psi_1^k,\psi_2^k)~,
\]
\\
For $i=1,2$, update $\psi_i$:
\[
\psi_i^{k+1} := \psi_i^{k} + \delta_{\psi_i}.
\]
\\
Obtain the optimal solution $(x^*(\psi_1^{k+1},\psi_2^{k+1}),X^*(\psi_1^{k+1},\psi_2^{k+1}))$ of the mBQP relaxation, considering $\alpha:=\hat{\alpha}$,  $\gamma_1:=\exp({\psi}_1^{k+1})$, and $\gamma_2:=\exp({\psi}_2^{k+1})$.
\\
For $i=1,2$, set:
\begin{align*}
&F_i^{k+1}:=F_i(C,s;\exp({\psi}_i^{k+1}),(x^*(\psi_1^{k+1},\psi_2^{k+1}), X^*(\psi_1^{k+1},\psi_2^{k+1})),\\
&f_i^*(\psi_1^{k+1},\psi_2^{k+1}):=f_i(C,s;\exp({\psi}_i^{k+1}),(x^*(\psi_1^{k+1},\psi_2^{k+1}), X^*(\psi_1^{k+1},\psi_2^{k+1})).
\end{align*}
\\
Output:  $(x^*(\psi_1^{k+1},\psi_2^{k+1}),X^*(\psi_1^{k+1},\psi_2^{k+1})); \psi_i^{k+1},F_i^{k+1},f_i^*(\psi_1^{k+1},\psi_2^{k+1}), \, i=1,2$.
\end{quote}
\end{algorithm}

Finally, in order to obtain a good  bound, we propose an algorithmic approach where we start from given values for the parameters $\alpha$, $\psi_1$, and $\psi_2$ and alternate between solving problems
\eqref{p1a} and \eqref{p1psi}, applying respectively, the procedures described in Algorithms \ref{alg:ipm_iteration} and \ref{alg:newton_iteration}.

\begin{figure}[!ht]
\captionsetup[subfigure]{labelformat=empty}
    \centering
\subfloat[][]{
        \includegraphics[width=0.8\textwidth]{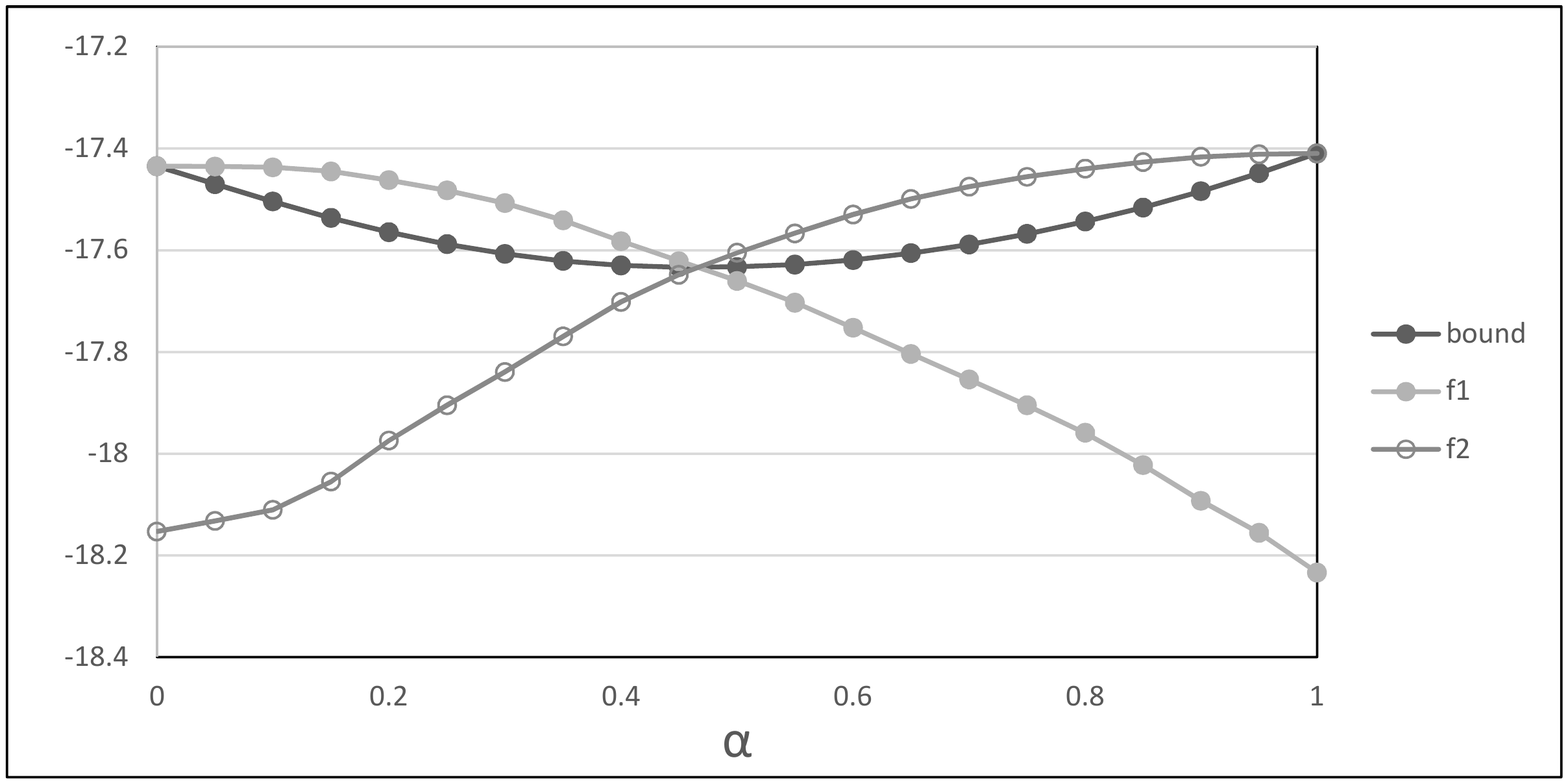}
}

\subfloat[][]{
        \includegraphics[width=0.8\textwidth]{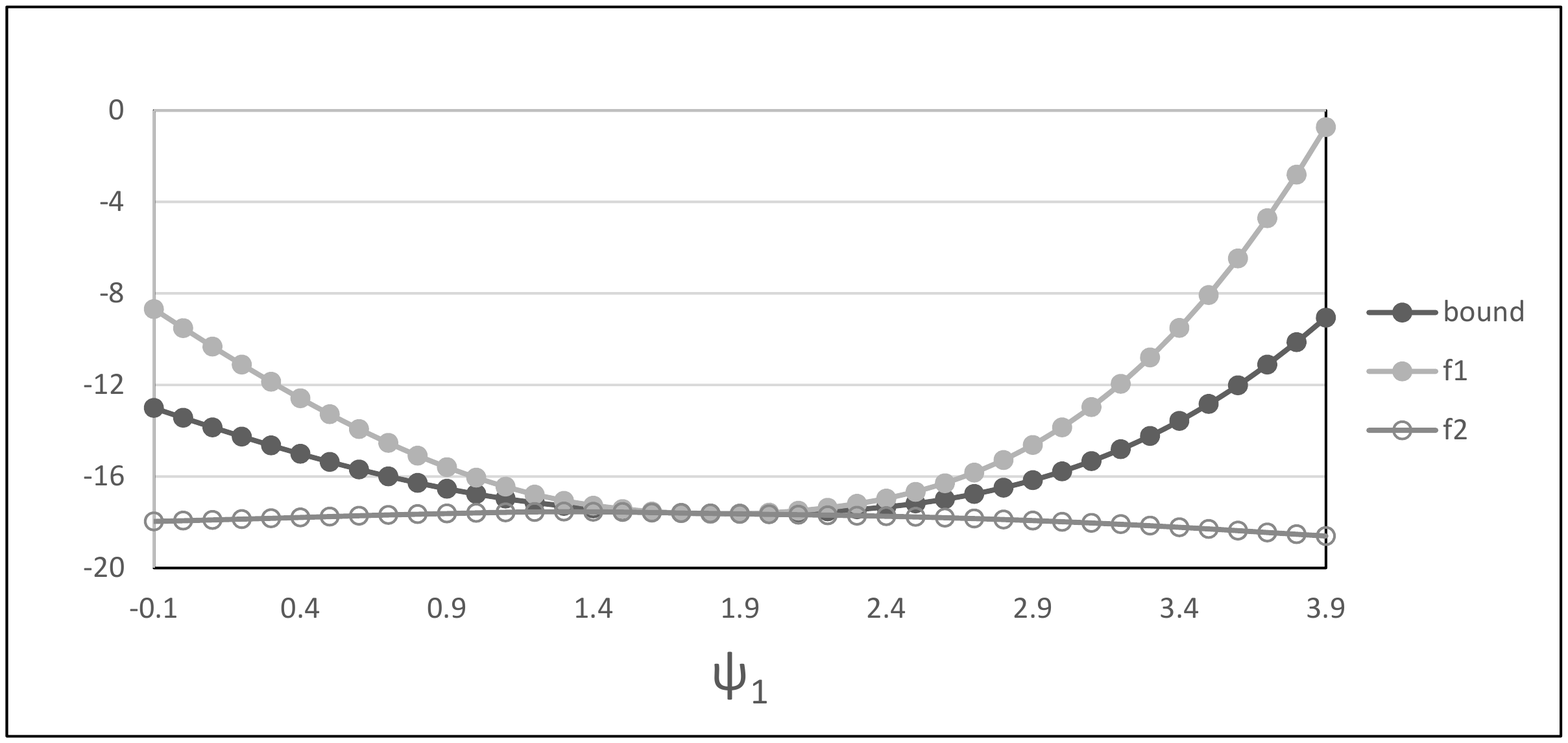}
}

\subfloat[][]{
        \includegraphics[width=0.8\textwidth]{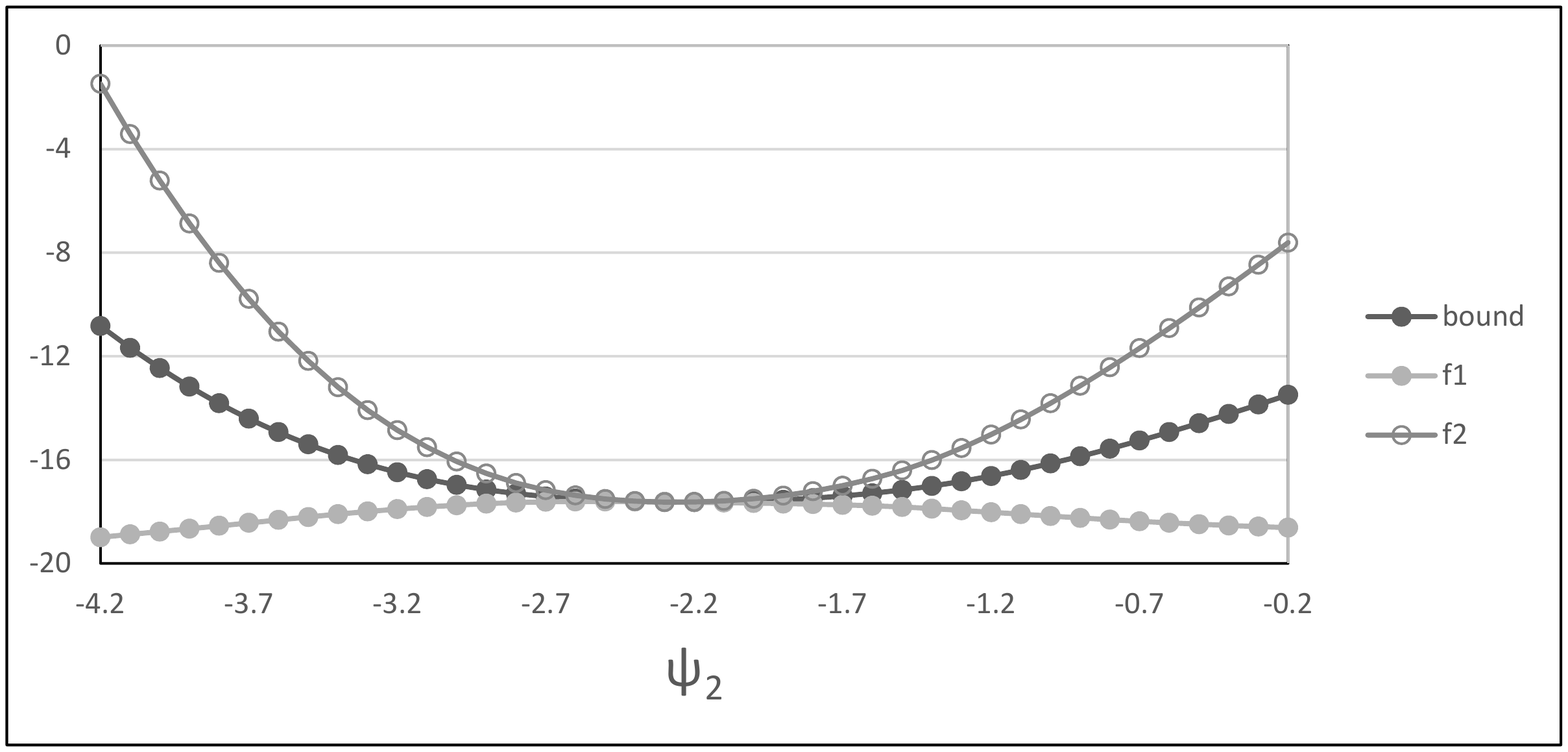}
}
\caption{Variation of $f_1,f_2$, and the overall bound, with $\alpha,\psi_1,\psi_2(n=63,s=10)$}\label{varyv1v2}
\end{figure}

%

In Figure \ref{varyv1v2} we illustrate how $f_1$, $f_2$, and the (strengthened) mBQP bound vary with each of the parameters $\alpha$, $\psi_1$, and $\psi_2$, separately,  for the instance with $n=63$, $s=10$. To construct each plot in Figure \ref{varyv1v2}, we fix two of the parameters and vary the other. The values of the two parameters that are fixed were obtained by the procedure described above, i.e., alternating between the execution of Algorithms \ref{alg:ipm_iteration} and \ref{alg:newton_iteration}. The interval in which the third parameter varies is centered in the value also obtained with the alternating algorithm, so the best bound obtained by the algorithm is depicted in the figure.
The plots in Figure \ref{varyv1v2} were considered to support  the approximations pointed in \eqref{gradap} and \eqref{aprox2}, used in the computation of the search directions of Algorithms \ref{alg:ipm_iteration} and \ref{alg:newton_iteration}.


\section{Mixing the NLP bound with the complementary NLP bound}\label{sec:mixingNLPwiththecomplement}

Now, we introduce the \emph{mixed NLP (mNLP) bound}:
\begin{align*}
&w(C,s;\alpha,\gamma_1,\gamma_2):=\\
& \max~ (1-\alpha)\left(
 \ldet \left(
\gamma_1 X^{p/2}( C-D) X^{p/2} +(\gamma_1D)^x
\right) - s {\rm log} \gamma_1
\right)\\
& \quad + \alpha \left( \ldet \left( \gamma_2 Y^{\bar{p}/2}( C^{-1}-\bar{D}) Y^{\bar{p}/2} +(\gamma_2\bar{D})^y \right)
 - (n-s) {\rm log} \gamma_2
+ \ldet  C
\right),\\
&\mbox{subject to:}\\
&e'x=s,~
 x+y=e,
\end{align*}
where $0\leq \alpha \leq 1$ is a weighting parameter.

The objective function of the mNLP relaxation is defined over the order-$n$ diagonal matrices $D$ and $\bar{D}$, the order-$n$ vectors $p$ and $\bar{p}$, and the scaling parameters $\gamma_1,\gamma_2>0$. The following notation is also employed in its definition: $X:=\diag(x)$,  $Y:=\diag(y)$, and  $(V^u)_{i,i}:= V_{i,i}^{u_i}$, $i=1,\ldots,n$, for a diagonal matrix $V$ and a vector $u$.

In \cite{AFLW_Using}, three different strategies are presented for choosing $D$, $p$, and $\gamma_1$, in order to have the NLP relaxation proven convex. Analogously, the strategies also applies to the selection of the parameters $\bar{D}$, $\bar{p}$, and $\gamma_2$, for the complementary problem. In our numerical experiments with the NLP bound, we have chosen these parameters based on the so-called ``NLP-Trace'' strategy, where $D$  minimizes the trace of $D-C$, subject to $D-C$ being positive semidefinite. Once $D$ is chosen, the scaling parameter $\gamma_1$ should be selected in the interval $[1/d_{\max}, 1/d_{\min}]$ (see \cite{AFLW_Using}). In our experiments, we have tested 100 values for $\gamma_1$ in this interval an report results for the best one. The same strategy is applied to the complementary problem. We note that the optimal scaling factors  for the mBQP bound were obtained with Newton steps in the previous section,  as described in Algorithm \ref{alg:newton_iteration}. The same methodology could not be applied here, because  the objective function of the mNLP relaxation is neither convex in the scaling parameters nor in the
logarithms of the scaling parameters. Therefore, for the results we present on the mNLP bound, we choose $\gamma_1$ to be the best scaling parameter for the original NLP bound ($\alpha=0$), among the 100 values tested, we choose
$\gamma_2$ to be the best scaling parameter for the complementary NLP bound ($\alpha=1$), among the 100 values tested. To select $\alpha$ for each instance, we obtained the mNLP bound for all $\alpha = 0.1i$, $i=0,1,\ldots,10$. The results reported correspond to the best such $\alpha$.


Finally, we note that unlike the mBQP bound,  the mNLP bound  cannot be computed by SDPT3, via Matlab and Yalmip. So, to compute it, we have coded an
interior-point algorithm, also in Matlab. The solution procedure  is the same as described in \cite[Section 3]{AFLW_Using},
where the NLP bound and the complementary NLP bound  are considered. Later, the procedure was also applied in the related
work \cite{AFLW_Remote}. The procedure employs a long-step path following methodology, using
logarithmic barrier terms for the bound constraints on $x$ (i.e., $0\leq x \leq e$).
For a fixed value of the barrier parameter $\mu$, the barrier function is approximately minimized on
$\{x\in\mathbb{R}^n : e'x=s\}$.
The parameter  $\mu$ is then reduced and the process is repeated, until $\mu$ is small enough for
an approximate minimizer to be within a prescribed tolerance of optimality. The
tolerance  is certified by a dual solution generated by the algorithm, providing a
valid upper bound for the optimal value of NLP.

In Figure \ref{fig:nlp_compnlp}, we illustrate our approach.
By mixing the NLP-Trace bound and the complementary NLP-Trace bound, we were able to
obtain an improvement for the $n=124$ problem in the vicinity of $s=73$.

\begin{figure}[!ht]
    \centering
        \includegraphics[width=0.9\textwidth]{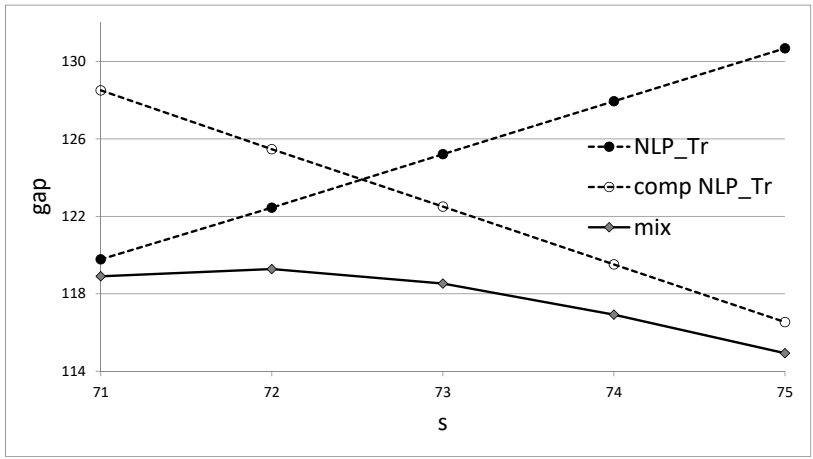}
\caption{Mixing the NLP bound with complementary NLP bound}\label{fig:nlp_compnlp}
\end{figure}



\section{On the linx bound}\label{sec:mixingLinx}

Next, we consider the linx bound introduced by \cite{Kurt_linx}, i.e., the solution of
\begin{equation}
\label{linx_bound}
\max \{ \frac{1}{2} v(\gamma,x) \, | \, e'x=s, \, 0\leq x\leq e\}
\end{equation}
where
\[
v(\gamma,x):=  \ldet F(\gamma,x) - s \log \gamma
\]
and
\begin{equation}
\label{flinx}
F(\gamma,x):= \gamma C\diag(x)C+I-\diag(x),
\end{equation}

The linx bound has excellent performance, and it is a challenge to improve upon it.
In the the remainder of this section, we consider fine tuning the bound via its scaling parameter.
In \S\ref{sec:mixingNLPand another}, we are able to get an improvement on the
linx bound by mixing it with the NLP bound.

\subsection{Optimizing the linx bound on the scaling parameter $\gamma$}

The linx bound
depends on the scaling parameter $\gamma$. \cite{Kurt_linx}
observed that the linx bound is particularly sensitive to
the choice of $\gamma$. This is probably due to
the fact that the bound is derived by bounding the \emph{square} of
the determinant of an order-$s$ principle submatrix of $C$.
So for mixing with the linx bound, it is very useful
to be able to optimize on $\gamma$.

To find the best  bound, we now define $\psi:=\log(\gamma)$ and formulate the problem
\begin{equation}
\label{p1alinx}
\min_{\psi} \, \{H(\psi)  \}~,
\end{equation}
where
\begin{equation*}
\label{Fpsislinx}
H(\psi) := v(\exp(\psi), x^*)~,
\end{equation*}
and where $x^*$  is a maximizer of \eqref{linx_bound}, with $\gamma(=\exp(\psi))$ fixed.

\begin{thm}
The function $H(\psi)$ is convex in $\psi \in \mathbb{R}$.
\end{thm}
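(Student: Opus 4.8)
The plan is to mirror the argument used for $f_1$ in the proof of Theorem~\ref{thmconvexity}. First note that the factor $\tfrac12$ in \eqref{linx_bound} does not affect the set of maximizers, so $x^*$ is also a maximizer of $\max\{v(\gamma,x) : e'x=s,\ 0\le x\le e\}$, and hence
\[
H(\psi)=\max\{v(\exp(\psi),x) : e'x=s,\ 0\le x\le e\}
\]
is the pointwise maximum, over the \emph{fixed} feasible region, of the single-variable functions $\psi\mapsto v(\exp(\psi),x)=\ldet F(\exp(\psi),x)-s\psi$. Since a pointwise maximum of convex functions is convex, it suffices to prove that for each fixed feasible $x$, the function $\psi\mapsto v(\exp(\psi),x)$ is convex. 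Observe that this reduction avoids any need to differentiate $x^*(\psi)$, or even to know that $x^*(\psi)$ is differentiable.

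Fix a feasible $x$ and set $M:=C\diag(x)C$, $N:=I-\diag(x)$, so that $F(\gamma,x)=\gamma M+N$ with $\gamma=\exp(\psi)$. Since $x\ge 0$ we have $M=(C\diag(x)^{1/2})(C\diag(x)^{1/2})'\succeq 0$; since $x\le e$ we have $N\succeq 0$; and $F(\gamma,x)\succ 0$ for $\gamma>0$ (this is needed for the linx bound to be well-defined in the first place: if $v'F(\gamma,x)v=0$ then both $v'Nv=0$ and $v'Mv=\|\diag(x)^{1/2}Cv\|^2=0$, which forces $v$ to be supported on $S:=\{i:x_i=1\}$ and $C[S,S]v_S=0$, whence $v=0$ because $C\succ 0$).

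Differentiating in $\psi$, and using $\partial F/\partial\psi=\gamma M=F-N$ exactly as in the proof of Theorem~\ref{thmconvexity}, I obtain
\[
\frac{\partial}{\partial\psi}\,v(\exp(\psi),x)=\gamma\bigl(F^{-1}\bullet M\bigr)-s=n-s-F^{-1}\bullet N,
\]
and then, differentiating once more and using $\partial F^{-1}/\partial\psi=-\gamma F^{-1}MF^{-1}$,
\[
\frac{\partial^2}{\partial\psi^2}\,v(\exp(\psi),x)=\gamma\,\Trace\!\bigl(F^{-1}MF^{-1}N\bigr).
\]
It remains to see that this is nonnegative. As $F^{-1}$ is symmetric and $M\succeq 0$, we have $F^{-1}MF^{-1}=(F^{-1})'MF^{-1}\succeq 0$; together with $N\succeq 0$ this gives $\Trace(F^{-1}MF^{-1}N)=(F^{-1}MF^{-1})\bullet N\ge 0$, since the dot product of two positive-semidefinite matrices is nonnegative. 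As $\gamma=\exp(\psi)>0$, the second derivative is $\ge 0$, so $\psi\mapsto v(\exp(\psi),x)$ is convex for each fixed feasible $x$, and therefore $H$ is convex.

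The only step that requires any care is the bookkeeping in the two differentiations — handling the $-s\psi$ term and the chain-rule factors — but this is entirely parallel to the computation already carried out for $f_1$, so there is no genuine obstacle; the essential content is simply the identity $\partial^2 v/\partial\psi^2=\gamma\,\Trace(F^{-1}MF^{-1}N)$ together with the positive-semidefiniteness of $M$ and $N$.
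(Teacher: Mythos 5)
Your proof is correct and follows essentially the same route as the paper: reduce to convexity of $\psi\mapsto v(\exp(\psi),x)$ for each fixed feasible $x$ (viewing $H$ as a pointwise maximum), compute $\partial^2 v/\partial\psi^2=\gamma\,(e-x)'\diag\bigl(F^{-1}(C\diag(x)C)F^{-1}\bigr)$ — your trace form $\gamma\,\Trace(F^{-1}MF^{-1}N)$ is the same quantity — and conclude nonnegativity from $C\diag(x)C\succeq 0$ and $F\succ 0$. Your explicit verification that $F(\gamma,x)\succ 0$ on the whole feasible box is a small added detail the paper treats as immediate, but it does not change the argument.
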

\begin{proof}
Based on the same argument used in the proof of Theorem \ref{thmconvexity}, we show that $v(\exp(\psi),x)$ is convex in $\psi$, for fixed $x$ in the feasible set of \eqref{linx_bound}.

We have
\[
\begin{array}{lll}
\frac{\partial }{\partial \gamma} v(\gamma,x)&=&  F(\gamma,  x)^{-1}\bullet ( C\diag(x)C ) - \frac{s}{\gamma}\\
&=&  \frac{1}{\gamma} (F(\gamma,  x)^{-1}\bullet ( ( F(\gamma,x) - I +\diag(x)  ) - s)\\
&=&  \frac{1}{\gamma} (F(\gamma,  x)^{-1}\bullet ( \diag(x) - I  ) + n - s)\\
\frac{\partial^2 }{\partial \gamma^2} v(\gamma,x)&=&  \frac{\partial}{\partial \gamma} \left(  \frac{1}{\gamma} (F(\gamma,x)^{-1}\bullet ( \diag(x) - I  ) + n - s)  \right) \\
&=& - \frac{1}{\gamma^2} (F(\gamma,  x)^{-1}\bullet ( \diag(x) - I  ) + n - s) \\
&+& \frac{1}{\gamma} (e-x)'\diag(F(\gamma,x)^{-1}( C\diag(x)C)F(\gamma,x)^{-1}).
\end{array}
\]
Therefore
\[
\begin{array}{lll}
\frac{\partial }{\partial \psi} v(\gamma,x)&=&  \gamma\frac{\partial }{\partial \gamma} v(\gamma,x)\\
&=& F(\gamma,  x)^{-1}\bullet ( \diag(x) - I ) + n - s,
\end{array}
\]
and
\begin{equation}
\label{secdervlinx}
\begin{array}{lll}
\frac{\partial^2 }{\partial \psi^2} v(\gamma,x)&=&  \gamma\frac{\partial }{\partial \gamma} v(\gamma,x)  + \gamma^2\frac{\partial^2 }{\partial \gamma^2} v(\gamma,x)\\
&=& \gamma (e-x)'\diag(F(\gamma,x)^{-1}( C\diag(x)C)F(\gamma,x)^{-1}).
\end{array}
\end{equation}
Now, it remains to show that
\[
\frac{\partial^2 v}{\partial\psi^2}(\exp(\psi), x^*) \geq 0, \; \forall \psi.
\]
Considering \eqref{secdervlinx}, it suffices to show that \[\diag(F(\exp(\psi), x^*) ^{-1}( C\diag(x^*)C)F(\exp(\psi), x^*) ^{-1} ) \geq 0.\]
We have
 $C\succ 0$ and $\diag(x^*)\succeq 0$, therefore $ C\diag(x^*)C\succeq 0$. Then, it is also clear from \eqref{flinx} that  $F(\exp(\psi),x^*)\succ 0$ and, therefore,
\[F(\exp(\psi), x^*) ^{-1}(C\diag(x^*)C)F(\exp(\psi), x^*) ^{-1} \succeq 0,\]
which completes the proof.
\qed
\end{proof}

\begin{rem}
By working with  $\psi:=\log(\gamma)$ and establishing convexity, we are able to
rigorously find the best values of the $\gamma_i$.
\cite{Anstreicher_BQP_entropy} does not work that way.
Working directly with the scaling parameters, $\gamma$,
he heuristically sought a good value for $\gamma$.
\end{rem}

\subsection{The Newton method  on the variable $\psi:=\log(\gamma)$}

 The optimality condition for \eqref{p1alinx} can be written as
\[
G(\psi):=\frac{\partial }{\partial \psi} v(\exp(\psi),x^*) = n-s-F(\exp(\psi),  x^*)^{-1}\bullet(I-\diag(x^*))=0.
\]
We aim at improving the linx bound by taking Newton steps to solve the  nonlinear equation above. The Newton direction $\delta_{\psi}$ is then defined by
\[
H_G(\psi)
\delta_{\psi} = - G(\psi)~,
\]
where
\[
\begin{array}{rl}
& H_G(\psi):= \frac{\partial^2 }{\partial \psi^2} v(\exp(\psi),x^*) \\
&\quad = \exp(\psi)(e-x^*)'\diag\left(F(\exp(\psi),  x^*)^{-1}(C\diag(x^*)C)F(\exp(\psi),  x^*)^{-1}\right).
\end{array}
\]

\section{Mixing the NLP bound and a ``non-NLP bound''}\label{sec:mixingNLPand another}

A convenient solver for calculating the BQP bound and its complement
and also for calculating the linx bound is SDPT3 via Yalmip.
But the NLP bound and its complement are  not amenable to solution by SDPT3 via Yalmip.
So we developed our own IPM for calculating the NLP bound.
Because of this dichotomy between available solvers, we need a special approach for
mixing the NLP bound or its complement, with any of the BQP bound, its complement, or the linx bound.

We are not very concerned with efficiency. Rather, we only seek a practical method for
calculating these mixed bounds to see if we can get an improvement
on the unmixed bounds by mixing.

Our idea is simply to apply Lagrangian relaxation to the mixing bound, in its form
with duplicated variables, as follows:
\begin{align*}
v(\alpha) :=& \max\left\{ \alpha f_1(x,\mathcal{X})
+ (1-\alpha) f_2(y,\mathcal{Y}) \, : \,
(x,\mathcal{X})\in \mathcal{P},\, (y,\mathcal{Y})\in \mathcal{Q},\, x+y=e
\right\}\\
=& \min_{\pi\in\mathbb{R}^n} \Biggl\{\max \left\{
 \alpha f_1(x,\mathcal{X})
+ (1-\alpha) f_2(y,\mathcal{Y})
+ \pi'\left( e-x-y \right)
\right.
~:~\\
&\qquad \qquad \qquad\left.
(x,\mathcal{X})\in \mathcal{P},~ (y,\mathcal{Y})\in \mathcal{Q}
\right\}\Biggr\}.\\
=& \min_{\pi\in\mathbb{R}^n} \Biggl\{ \pi'e +
\max \left\{
 \alpha f_1(x,\mathcal{X}) - \pi'x
~:~
(x,\mathcal{X})\in \mathcal{P}
\right\}\\
&\qquad \qquad \qquad+ \max \left\{(1-\alpha) f_2(y,\mathcal{Y}) -\pi'y
~:~ (y,\mathcal{Y})\in \mathcal{Q}
\right\}
\Biggr\}.
\end{align*}
In this form, we apply subgradient optimization to find an optimal $\pi\in\mathbb{R}^n$,
and at each step the Lagrangian subproblem decouples into the $(x,\mathcal{X})\in \mathcal{P}$
maximization problem and the $(y,\mathcal{Y})\in \mathcal{Q}$  maximization problem.
So we can apply separate solvers to each.

In Figure \ref{fig:subgradmix}, we illustrate some successes with our approach.
By mixing the NLP-Trace bound and linx bound, we were able to
obtain an improvement for the $n=63$ problem in the vicinity of $s=25$.

%
%

\begin{figure}[!ht]
\captionsetup[subfigure]{labelformat=empty}
\centering
\subfloat[][]{
\includegraphics[width=.95\textwidth]{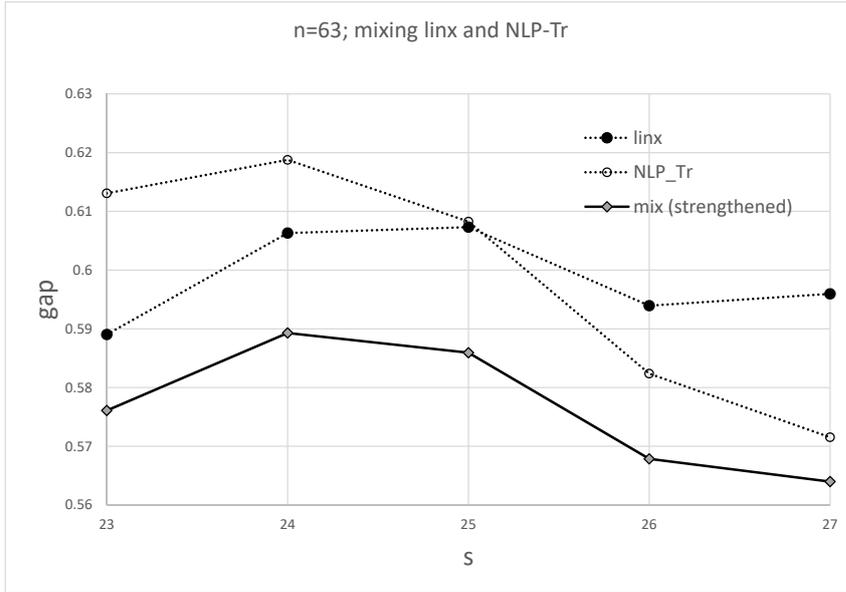}
}
\caption{Mixing the NLP bound with ``Non-NLP bounds''}\label{fig:subgradmix}
\end{figure}


\section{Concluding remarks}\label{sec:conc}

It is a challenge to \emph{efficiently} employ our ideas in the
context of branch-and-bound. We need to find effective mixing parameters
$\alpha$ quickly.
Note that in our notation, \cite{Anstreicher_BQP_entropy} is
using only $\alpha=0$ or $1$, and in the context of branch-and-bound,
each child inherits $\alpha$ from its parent,
only updating the choice occasionally.
In the context of branch-and-bound, we would now
expect that for many subproblems, we would have  $\alpha=0$ or $1$.
But we can further expect that for many we will have $0<\alpha<1$, and
we would then gain from our approach. The
guidance of \cite{Anstreicher_BQP_entropy} is:
``we use a simple criterion based on the number of fixed
variable and depth in the tree to decide when to check the
other bound''. So we would proceed similarly, doing a
univariate search for a good $\alpha$ after an inherited
value becomes stale.

It is not clear at all how our mixing idea could be adapted to
``spectral and masked spectral bounds'' (see \cite{KLQ,AnstreicherLee_Masked,BurerLee,HLW,LeeWilliamsILP}),
because these are apparently not based on convex relaxation.
We would like to highlight this as an interesting area to explore.

Very recently, \cite{Weijun} presented new results on a relaxation and
on an approximation algorithm for MESP. It will be interesting to see if some
of those results can be exploited in our context.

Finally, our general mixing idea, although well suited for MESP, should
find application on other combinatorial-optimization problems
with nonlinearities. It is a challenge to find other good applications.


\section*{Acknowledgments}
J. Lee was supported in part by ONR grant N00014-17-1-2296, AFOSR grant FA9550-19-1-0175,
and Conservatoire National des Arts et M\'etiers.
M. Fampa was supported in part by CNPq grants 303898/2016-0 and
434683/2018-3.
J. Lee and M. Fampa were supported in part by funding from the Simons
Foundation and the Centre de Recherches Math\'ematiques, through the Simons-CRM
scholar-in-residence program.
The authors thank Kurt Anstreicher for
supplying them with his (and Jon's) Matlab codes and advice in running them.
Moreover, Kurt's talk at the
2019 Oberwolfach workshop on Mixed-Integer Nonlinear Programming
catalyzed the renewed interest of Fampa and Lee in the topic of maximum-entropy sampling.


\FloatBarrier

\bibliographystyle{alpha}

\bibliography{JLee}
\end{document}